\newtheorem{theorem}{Theorem}
\newtheorem{lemma}[theorem]{Lemma}
\newcommand{\D}{\Delta}
\newcommand{\p}{\partial}
\renewcommand{\le}{\left}
\newcommand{\ri}{\right}
\newcommand{\Rmnum}[1]{\expandafter\@slowromancap\romannumeral #1@}
\journal{***}
\begin{document}
	
\begin{frontmatter}		

\title{Topological degree for negative fractional Kazdan--Warner equation \\
on finite graphs}
			
\author[author1]{Yang Liu}\ead{dliuyang@tsinghua.edu.cn}
\author[author2]{Liang Shan}\ead{shanliang@ruc.edu.cn}
\author[author3]{Mengjie Zhang\footnote{*corresponding author}$^{*,}$}\ead{zhangmengjie@sdufe.edu.cn}

	\address[author1]{Yau Mathematical Sciences Center, Tsinghua University, Beijing, 100084, China}	
	\address[author2]{Gaoling School of Artificial Intelligence, Renmin University of China, Beijing, 100872, China}	
	\address[author3]{Shandong Key Laboratory of Blockchain Finance, Shandong University of Finance and Economics, Jinan,  250014, China}

\begin{abstract}

Studies on Kazdan--Warner equations on graphs have grown steadily, yet the fractional case remains insufficiently explored.
Using topological degree theory, this work investigates the fractional Kazdan--Warner equation in the negative case on connected finite graphs, focusing on existence and multiplicity of solutions.
This work not only  extends the earlier result of  S. Liu and Yang (2020) to the fractional setting,
but also provides a concise proof for the work of  Shan and Y. Liu (2025).
\end{abstract}
		
\begin{keyword}
Topological degree,   Fractional Kazdan-Warner equation,  Finite graphs
\MSC[2020] 35R02,  35R11, 39A12
\end{keyword}
\end{frontmatter}

	\section{Introduction}

Since the foundational work of Kazdan and Warner \cite{K-W-1,K-W-2} in 1974, the problem of prescribing Gaussian curvature on two-dimensional manifolds has been extensively studied.  They established nearly complete solvability results for the corresponding curvature equation on compact and non-compact surfaces.
Of particular interest is the negative case, where the existence of multiple solutions carries geometric and analytic significance.  Let $(\Sigma,g)$ be a closed Riemann surface with negative Euler characteristic, so that  $\int_\Sigma \kappa dv_g<0$, where $\kappa$ denotes the Gaussian curvature of $(\Sigma,g)$. In 1995,  Ding and Liu \cite{D-L} considered the equation
\begin{align}\label{K-W-M}
\Delta_g u = (h+\lambda) e^{2u} - \kappa,
\end{align}
where $\Delta_g$ is the Laplacian on $(\Sigma, g)$,
 $\lambda\in\mathbb{R}$, and $h$ is a nontrivial smooth function satisfying $h\leq 0$. By combining variational methods with the method of upper and lower solutions, they obtained existence results for \eqref{K-W-M}. For further developments and related contributions we refer the reader to \cite{B-S-W,Struwe1,Struwe2,Yang-Zhu}.

The discrete counterpart presents features that differ markedly from the smooth setting, since curvature prescriptions on graphs do not admit a direct geometric interpretation. In the pioneering series of works \cite{A-Y-Y-1,A-Y-Y-2,A-Y-Y-3}, Grigor’yan, Lin and Yang developed a variational framework for analysis on graphs and initiated the study of the Kazdan--Warner equation in the discrete setting,
obtaining results that analogous the classical work of \cite{K-W-1}.
Further progress can be found in  \cite{C-M,G,G-J,Keller-Schwarz,L-S}. In \cite{L-S}, S. Liu and Yang
 studied the multiplicity problem for the negative case of \eqref{K-W-M} on a finite graph  $G=(V,E,\mu,w)$:
 	\begin{align}\label{14}
		-\Delta u=(h+\lambda) e^{2u}-\kappa,
	\end{align}
  where  $\Delta$ denotes the discrete Laplacian,      $\lambda\in\mathbb{R}$,   the function $h :V\rightarrow \mathbb{R} $ satisfies
   $h\not\equiv 0$ and $\max_{V} h=0$,   and      $\kappa:V\rightarrow \mathbb{R} $ satisfies   $\int_V\kappa d\mu<0$.
There has also been substantial progress on nonlinear equations on graphs, including Schr\"{o}dinger-type equations \cite{Z-Z,C-W-Y,Y-Zhao,H-X,Z-L-Y2}, mean field equations \cite{L2,L-Z,Z2}, Chern-Simons-Higgs equations \cite{HS,H-L-Y,H-W-Y,C-H},  and various other related problems \cite{H-L-W,L-W,P-S,W,S-Y-Z-1,G-H-S,H-S}.

Topological degree theory has long served as a central tool in the study of partial differential equations, both in Euclidean settings and on Riemann surfaces, see for example Li \cite{Y-Li}.
 In recent years, it has also proved useful for analyzing analogous equations on finite graphs. The work of Sun and Wang \cite{S-w} marked the first application of degree theory to the Kazdan--Warner equation on a finite graph,
\begin{align*}
-\Delta u = h e^{u} - c,
\end{align*}
where  $h$  is a prescribed function and $c$ is a real number.
Their approach has since inspired further developments, see \cite{L1, L-S-Y, Liu-Yang, Liu-Chen-Wang} and the references therein.

We now turn to a fractional analogue of the Kazdan--Warner equation on graphs.    In \cite{Z-L-Y}, Zhang, Lin and Yang stidied the following equation on a finite graph $G=(V,E,\mu,w)$,  namely
\begin{align}\label{s2:9}
(-\Delta)^{s} u = h e^{u} - c,
\end{align}
where  $(-\Delta)^{s}$ is the discrete fractional Laplacian with $s\in (0,1)$,  $h$  is a function on $V$, and $c\in \mathbb{R}$.  Using variational methods, they established the solvability of \eqref{s2:9}.
Later, Shan and Liu in \cite{S-L1}  extended   the result of \cite{L-S} to the fractional setting,  establishing existence and multiplicity for
\begin{align}\label{15}
(-\Delta)^s u = (h+\lambda)e^{2u} - \kappa,
\end{align}
under the same assumptions on $\lambda$, $h$ and $\kappa$ as in \eqref{14}.\\

In this paper,  we aim to apply the topological degree method to study \eqref{15} when $ \kappa$ is a negative constant.
We establish the existence and multiplicity results in \cite{S-L1} by a different method.  
Our main result is as follows.

	\begin{theorem}\label{s2:t1}
Let $G=(V,E,\mu,w)$ be a connected finite graph with vertex set $V$, edge set $E$, measure $\mu$ and weight $w$.
Let  $s\in (0,1)$, $c,\,\lambda \in \mathbb{R}$,  and set  $h_\lambda=h+\lambda$, where
  $h :V\rightarrow \mathbb{R} $ satisfies $ h\not\equiv 0$ and $\max_{V} h=0$.
Assume $c<0$, then the solvability of  the fractional Kazdan--Warner equation
	\begin{align}\label{1}
		(-\Delta)^s u= h_\lambda e^{2u}-c
	\end{align}
depends on  $\lambda$ as follows:
\begin{enumerate}[(i)]
\item If $\lambda\leq 0$, \eqref{1}  admits a unique solution.
\item  There exists a critical value $\Lambda^*_s\in(0,-\min_{V}h)$ such that
\begin{enumerate}[(a)]
\item if $\lambda\in(0,\Lambda^*_s)$,
   \eqref{1} has at least two distinct solutions;
 \item if  $\lambda=\Lambda^*_s$,      \eqref{1} has   at least one solution;
\item if $\lambda\in(\Lambda^*_s,+\infty)$,  \eqref{1} has  no solution.
\end{enumerate}
\end{enumerate}
	\end{theorem}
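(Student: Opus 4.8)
The plan is to treat the favourable range $\lambda\leq 0$ variationally and the range $\lambda>0$ by a Brouwer degree computation on the finite-dimensional space $\mathbb{R}^{|V|}$ of functions on $V$, using two structural facts on a connected finite graph: the constants span $\ker(-\Delta)^s$ and $(-\Delta)^s$ is self-adjoint, and a discrete maximum principle holds for $(-\Delta)^s$. Testing \eqref{1} against the constant $1$ yields the necessary condition
\[ \int_V h_\lambda e^{2u}\,d\mu=c\,\mathrm{Vol}(V)<0 ,\]
and, since $-h\geq 0$ with $\max_V h=0$, isolating $\lambda$ and dividing by $\int_V e^{2u}\,d\mu$ gives $\lambda<-\min_V h$ for every solvable $\lambda$. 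For $\lambda\leq 0$ one has $h_\lambda\leq 0$, $h_\lambda\not\equiv 0$; the functional $J(u)=\tfrac12\int_V u(-\Delta)^s u\,d\mu-\tfrac12\int_V h_\lambda e^{2u}\,d\mu+c\int_V u\,d\mu$, whose critical points solve \eqref{1}, is then strictly convex (the middle term equals $\tfrac12\int_V(-h_\lambda)e^{2u}$ with $-h_\lambda\geq 0$), and, splitting $u$ into its mean and mean-zero parts, coercive: $e^{2\bar u}$ controls $\bar u\to+\infty$, the term $c\bar u$ controls $\bar u\to-\infty$, and the graph Poincar\'e inequality $\int_V v(-\Delta)^s v\,d\mu\geq \lambda_1^s\|v\|_2^2$ for $\bar v=0$ controls the rest. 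Its unique minimizer is the unique solution, proving (i).

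Next I would show the solvable set is an interval and locate its endpoint. A solution at $\lambda_1$ is a supersolution at any $\lambda_0<\lambda_1$, and a sufficiently negative constant is a subsolution; monotone iteration together with the maximum principle then produces a solution at $\lambda_0$, so the solvable set has the form $(-\infty,\Lambda^*_s)$ or $(-\infty,\Lambda^*_s]$ with $\Lambda^*_s\leq-\min_V h$. That $\Lambda^*_s>0$ follows from the implicit function theorem: at $\lambda=0$ the Hessian $(-\Delta)^s-2h\,e^{2u_0}$ is positive definite (its zeroth mode is controlled by $-h\not\equiv 0$ and the remaining modes by $(-\Delta)^s$), so the solution continues to small $\lambda>0$.

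The technical heart is a priori bounds: for $\lambda$ in compact subsets of $(0,\Lambda^*_s]$, $-C_1\leq u\leq C_2$ uniformly. The lower bound is immediate from the maximum principle, since at a minimum vertex $(-\Delta)^s u\leq 0$ forces $h_\lambda e^{2u}\leq c<0$ there, which bounds $u$ below. The upper bound --- ruling out blow-up --- is the main obstacle, and it is precisely what the finiteness of $\Lambda^*_s$ reflects; I would control the oscillation $\|u-\bar u\|_\infty$ through the invertibility of $(-\Delta)^s$ on the mean-zero subspace and then pin down $\bar u$ via the necessary condition, the estimate being non-uniform only as $\lambda\downarrow 0^+$, where the second solution escapes to infinity (this is exactly why $(-\infty,0]$ carries a unique solution while $(0,\Lambda^*_s)$ carries two). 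With these bounds all solutions are confined to a fixed ball $B_R\subset\mathbb{R}^{|V|}$, so $\deg(J',B_R,0)$ is defined and homotopy invariant in $\lambda$.

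Finally, (c) is the definition of $\Lambda^*_s$ together with the interval property; for (b), a sequence $\lambda_n\uparrow\Lambda^*_s$ of uniformly bounded solutions subconverges to a solution at $\Lambda^*_s$, whose existence also forces $\Lambda^*_s<-\min_V h$. For the multiplicity (a), fix $\lambda\in(0,\Lambda^*_s)$: homotoping the parameter up to a value beyond $\Lambda^*_s$ inside $B_R$ (no solutions on $\partial B_R$ along the way, and none at all past $\Lambda^*_s$) gives $\deg(J',B_R,0)=0$, whereas the minimal solution furnished by the sub/supersolution method is a local minimizer of $J$ and hence has local degree $+1$; since $0\neq 1$, there must be a second solution, yielding the two distinct solutions. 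The decisive difficulty throughout is the uniform upper estimate; once it is secured, the degree bookkeeping is routine.
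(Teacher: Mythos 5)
Your architecture largely parallels the paper's (a priori bounds, a degree computation, sub/supersolutions, the critical value $\Lambda_s^*$ defined by a supremum, and a local-degree argument for multiplicity), and several of your substitutions are legitimate alternatives: strict convexity plus coercivity for part (i) in place of the paper's degree-plus-maximum-principle uniqueness, and the implicit function theorem at $\lambda=0$ in place of the paper's explicit supersolution $\psi$ solving $(-\Delta)^s\psi=he^{2\psi}-(c-1)$. But there is a genuine gap at exactly the point you yourself call the technical heart: your sketch of the uniform upper bound is circular. Inverting $(-\Delta)^s$ on the mean-zero subspace gives $\|u-\bar u\|_\infty\le C\|h_\lambda e^{2u}-c\|_\infty$, and nothing bounds the right-hand side a priori: the only free information is the identity $\int_V h_\lambda e^{2u}\,d\mu=c|V|$, which controls the \emph{difference} of $\int_V h_\lambda^{+}e^{2u}\,d\mu$ and $\int_V h_\lambda^{-}e^{2u}\,d\mu$, not either term; both can diverge together, so the oscillation is not controlled without the very bound you are trying to prove. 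The paper's Lemma \ref{t2} works pointwise at a maximum vertex $x_1$ instead, splitting into the cases $h_\lambda(x_1)>0$, $h_\lambda(x_1)<0$, $h_\lambda(x_1)=0$, using the already-established lower bound, Young's inequality, and --- decisively in the case $h_\lambda(x_1)=0$ --- the strict positivity of the fractional kernel $W_s(x,y)>0$ for \emph{all} $x\neq y$, which gives $C_m>0$ and the oscillation bound $\max_V u-\min_V u\le-c/C_m$. Some such pointwise mechanism is required; the mean/mean-zero splitting will not produce it.

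The gap then propagates into your degree computation. Homotoping in $\lambda$ from $\lambda_0\in(0,\Lambda_s^*)$ past $\Lambda_s^*$ requires bounds uniform over the whole interval $[\lambda_0,\Lambda_s^*+\epsilon]$, and this uniformity is delicate precisely where the sign pattern of $h_\lambda$ changes, i.e.\ at values $\lambda=-h(x)$ for vertices $x$: there the positive values of $h_\lambda$ degenerate to $0$ and the pointwise estimates deteriorate (roughly like $\log(1/|h_\lambda(x)|)$), so your assertion that non-uniformity occurs ``only as $\lambda\downarrow0^+$'' is unsubstantiated. The paper sidesteps this entirely: it fixes $\lambda$ and homotopes the \emph{coefficient} $h_\lambda$ to the step function $\tilde h_\lambda$ of \eqref{43}, which keeps the same sign configuration, so the a priori estimate holds uniformly in the homotopy parameter $t$; the endpoint degree is then computed directly --- for $\lambda\le0$ the unique solution of $\mathcal{T}(u,1)=0$ is the constant $\log\sqrt{-c}$ with positive definite linearization $\bm{L_s}-2c\bm{I}$, giving degree $1$, while for $\lambda>0$ testing $\mathcal{T}(u,1)=0$ with $e^{-2u}$ forces $\int_V\tilde h_\lambda\,d\mu<0$, contradicting \eqref{6}, so there are no solutions at all and the degree is $0$ for \emph{every} $\lambda>0$, with no need to cross $\Lambda_s^*$. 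Two smaller points: monotone iteration yields a minimal solution in the order interval, but its local minimality for $J_\lambda$ (needed for your local degree $+1$) requires minimizing $J_\lambda$ over the order interval and showing the minimizer is interior, as in the paper's Lemma \ref{l7}; and ``a local minimizer has local degree $+1$'' presumes the critical point is isolated and a critical-groups argument in the possibly degenerate case (the paper invokes \cite[Theorem 3.2]{C}), the non-isolated case being handled by the trivial dichotomy that one then already has infinitely many solutions.
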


The proof of Theorem \ref{s2:t1} is based on the topological degree theory, whose  core idea  is to reduce a complex equation to a simpler one through the homotopy invariance of the topological degree. To explain this strategy, we begin by recalling some basic notations on finite graphs.
Let $C(V)$ be the space of all real-valued functions on  $V$.
For $u\in C(V)$, the integral over $V$ is defined by
\begin{align*}
\int_{V}ud\mu=\sum_{x\in V}u(x)\mu(x) .
\end{align*}
For any $p\in [1,+\infty)$,  the space  ${L^p(V)}$  consists of  functions with finite norm
\begin{align*}
\|u\|_p=\left(\sum_{x\in  V}|u(x)|^p\mu(x)\right)^{ 1/p},
\end{align*}
and  ${L^\infty(V)}$ is equipped with the norm $\|u\|_{\infty}=\sup_{x\in  V}|u(x)|.$
Define
\begin{align*}
B_R=\left\{u \in C(V) :\|u\|_{\infty}<R\right\}
\end{align*}
and  $\p B_R=\{u \in C(V):\|u\|_{\infty}=R\}.$

With these notations, we recall several basic facts about the fractional Laplace operator on  finite graphs, following \cite{L2, Z-L-Y}.
  Let $n$ be the number of vertices in $V$,  and set
$\{ \lambda_i\}_{i=1}^n$ and $\{\phi_i\}_{i=1}^n$ denote the eigenvalues and corresponding orthonormal eigenfunctions of the graph Laplacian $-\Delta$.
For any $s\in (0,1)$  and $u\in  C(V)$, the fractional Laplace operator $(-\Delta)^s$ is written as
\begin{align} \label{e-p}
(-\Delta)^s u(x)  = \frac{1}{\mu(x)}\sum_{y \in  V, \, y \neq  x}W_s(x, y) \left(u(x)-u(y)\right),
\end{align}
where the kernel
\begin{align}\label{Hs}
W_s(x, y) =  -\mu(x)\mu(y)\sum_{i=1}^{n}\lambda_i^s\phi_i(x)\phi_i(y),\quad x \neq y,
\end{align}
is positive and symmetric. In particular,   
the operator satisfies the  spectral property
 \begin{align}\label{phi-1}
( -\Delta)^s \phi_i =\lambda_i^s\phi_i , \quad  i= 1, 2,\cdots,n.
\end{align}
An integration by parts formula also holds: for any $u,v \in C(V)$,
  \begin{align}\label{part}
\int_{ V} v(-\Delta)^s u  d\mu=  \int_{ V}  \nabla^s u \nabla^s v d\mu =\int_{ V}u(-\Delta)^{s} v  d\mu,
\end{align}
where   
\begin{align}\label{N}
\nabla^su \nabla^sv   (x)
  =   \frac{1}{2\mu(x)}\sum_{y \in  V, \, y \neq x} W_s(x, y) \left(u(x)-u(y)\right) \left(v(x)-v(y)\right).
  \end{align}
For later use, we  define the fractional Kazdan--Warner map $\mathcal{F}: C(V)\rightarrow  C(V)$ by
     \begin{align}\label{3}
     	\mathcal{F}(u)=(-\Delta)^s u-h_\lambda e^{2u}+c.
     \end{align}

With the analytic framework in place, we outline the remainder of the paper.
 In  Section \ref{S2}, we  first give an a priori estimate for the solutions of  \eqref{1},  and then compute the topological degree of $\mathcal{F}$:  there exists $R_0>0$  such that  for all $R>R_0$,
	\begin{align*}
		\deg(\mathcal{F},B_R,0)= \left\{\begin{aligned}
&1, &\lambda\leq0,\\
			&0, &\lambda>0.		
\end{aligned}\right.
	\end{align*}
In Section \ref{S3}, we  establish  Theorem \ref{s2:t1} from the degree computation.
 In Section \ref{S4}, we  present  numerical experiments  illustrating how different parameter choices affect the critical value  $ \Lambda_s^* $.

\section{Topological degree}\label{S2}

Applying topological degree theory relies on two key steps.
The first step is to  derive a priori bound on solutions,  as presented in the following lemma.

	\begin{lemma}\label{t2}
		If $u$ solves the equation   \eqref{1}, then  $u$ is uniformly bounded.
	\end{lemma}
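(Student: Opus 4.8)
The plan is to produce a two-sided $L^\infty$ bound on any solution of \eqref{1} that depends only on the data $G,s,c,\lambda,h$, so that every solution lies in one fixed ball $B_{R_0}$. First I would integrate \eqref{1} over $V$. Since $(-\Delta)^s$ annihilates constants, \eqref{part} applied with $v\equiv 1$ gives $\int_V(-\Delta)^s u\,d\mu=0$, and \eqref{1} reduces to the mass identity $\int_V h_\lambda e^{2u}\,d\mu=c\,|V|<0$, where $|V|=\int_V d\mu$. Because $e^{2u}>0$, this already forces $h_\lambda$ to be negative somewhere, so $V^-:=\{x:h_\lambda(x)<0\}\neq\varnothing$; I write $V^+:=\{x:h_\lambda(x)>0\}$ for later use.

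For the lower bound I would evaluate \eqref{1} at a global minimum point $x_1$. As every kernel weight $W_s(x,y)$ is positive, formula \eqref{e-p} gives $(-\Delta)^su(x_1)=\frac{1}{\mu(x_1)}\sum_{y\neq x_1}W_s(x_1,y)\bigl(u(x_1)-u(y)\bigr)\le 0$, whence $h_\lambda(x_1)e^{2u(x_1)}\le c<0$. This forces $h_\lambda(x_1)<0$ and $e^{2\min_V u}\ge |c|/|h_\lambda(x_1)|\ge |c|/\|h_\lambda\|_\infty$, i.e. $\min_V u\ge m_0:=\tfrac12\log\!\bigl(|c|/\|h_\lambda\|_\infty\bigr)$.

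The upper bound is the crux. I would evaluate \eqref{1} at a global maximum point $x_0$, set $M=\max_V u$ and $\mathrm{osc}(u)=M-\min_V u$, and use two elementary consequences of \eqref{e-p} with $W_s>0$: keeping only the minimum-vertex term gives $(-\Delta)^su(x_0)\ge\frac{W_{\min}}{\mu_{\max}}\,\mathrm{osc}(u)$, while bounding every difference by $\mathrm{osc}(u)$ gives $(-\Delta)^su(x)\le D_{\max}\,\mathrm{osc}(u)$ at any vertex $x$, where $W_{\min}=\min_{x\neq y}W_s(x,y)$, $\mu_{\max}=\max_x\mu(x)$ and $D_{\max}=\max_x\frac{1}{\mu(x)}\sum_{y\neq x}W_s(x,y)$. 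If $h_\lambda(x_0)>0$, the equation, the upper estimate at $x_0$ and $\min_V u\ge m_0$ give $h_\lambda(x_0)e^{2M}-c\le D_{\max}(M-m_0)$, hence $e^{2M}\le \tfrac{D_{\max}}{\eta}(M-m_0)$ with $\eta=\min_{V^+}h_\lambda>0$; since the exponential dominates the linear term, this bounds $M$. If instead $h_\lambda(x_0)\le0$, then $h_\lambda(x_0)e^{2M}\le0$, so the equation and the lower estimate yield $\frac{W_{\min}}{\mu_{\max}}\mathrm{osc}(u)\le(-\Delta)^su(x_0)\le|c|$, i.e. $\mathrm{osc}(u)\le C_2:=\mu_{\max}|c|/W_{\min}$. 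To convert this into a bound on $M$ I split on $\lambda$: when $\lambda>0$ the set $V^+$ is nonempty (it contains the vertex where $h=0$), and the upper estimate at some $z\in V^+$ gives $h_\lambda(z)e^{2u(z)}\le c+D_{\max}C_2$, which caps $u(z)$ and hence $M\le u(z)+C_2$; when $\lambda\le0$ one has $h_\lambda\le0$ throughout, so the mass identity becomes $\int_{V^-}|h_\lambda|e^{2u}\,d\mu=|c|\,|V|$, and bounding the left side below by a single term at any vertex of $V^-$ together with $u\ge\min_V u$ caps $\min_V u$ from above, giving $M\le\min_V u+C_2$. In all cases $m_0\le\min_V u\le M\le C$, so $\|u\|_\infty\le R_0$ for a constant $R_0$ depending only on the data.

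I expect the main obstacle to be precisely the upper bound when the maximum is attained at a vertex with $h_\lambda(x_0)\ge0$, since there the pointwise equation provides no direct control on $u(x_0)$. The devices that resolve it — the exponential-versus-linear comparison when $h_\lambda(x_0)>0$, and transferring the oscillation bound either to a strictly positive vertex or, through the mass identity, to a strictly negative vertex when $h_\lambda(x_0)\le0$ — all rely on the strict positivity of the fractional kernel $W_s$ between \emph{every} pair of vertices, so that the global maximum communicates with the minimum and with the vertices of $V^\pm$ in a single application of \eqref{e-p}.
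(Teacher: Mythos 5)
Your proposal is correct: every step checks out, including the exponential-versus-linear closure when $h_\lambda(x_0)>0$ (note $M\ge\min_V u\ge m_0$, so the right-hand side is nonnegative and the sublevel set $\{M: e^{2M}\le \frac{D_{\max}}{\eta}(M-m_0)\}$ is bounded), and it shares the paper's core strategy of evaluating \eqref{1} at extremal vertices and exploiting the strict positivity of the kernel $W_s$ in \eqref{e-p}; your constants $D_{\max}$, $W_{\min}/\mu_{\max}$ and $\eta$ are exactly the paper's $C_M$, $C_m$ and $\Lambda_\lambda^{-1}$. Where you genuinely diverge is in the endgame of the upper bound. The paper splits three ways on the sign of $h_\lambda$ at the maximum point: for $h_\lambda(x_1)>0$ it closes by Young's inequality (equivalent to your exponential-dominates-linear step); for $h_\lambda(x_1)<0$ it gets the immediate bound $\Lambda_\lambda^{-1}e^{2\max_V u}\le -c$ with no oscillation detour; and for $h_\lambda(x_1)=0$ it derives the same oscillation bound as you, but then caps $\min_V u$ by re-evaluating the pointwise equation at the minimum vertex, where $h_\lambda<0$ is already known from the lower-bound step, giving $\Lambda_\lambda^{-1}e^{2u(x_0)}\le -c\left(1+\frac{C_M}{C_m}\right)$. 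You instead merge the cases $h_\lambda(x_0)\le 0$, and cap the minimum either through a vertex of $V^+$ (when $\lambda>0$) or through the integrated mass identity $\int_V h_\lambda e^{2u}\,d\mu=c|V|$ (when $\lambda\le 0$) --- two devices absent from the paper's proof of Lemma \ref{t2}. The trade-off: the paper's re-evaluation-at-the-minimum trick is purely pointwise, needs no split on $\lambda$, and is slightly shorter; your mass identity makes transparent why $h_\lambda$ must be negative somewhere and caps $\min_V u$ cheaply. Both arguments yield constants depending only on $G,s,c,\lambda,h$, as required for the homotopy in Lemma \ref{t3} --- though for that later use your dichotomy should be phrased as ``$\{h_{\lambda,t}>0\}$ empty or not'' rather than as a sign condition on $\lambda$ (the two coincide for $h_\lambda$ itself since $\max_V h=0$, but the proxy, not $\lambda$, is what the argument actually uses).
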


	\begin{proof}
Since the graph $G$ is finite, for any fixed $\lambda\in \mathbb{R}$, there exists a positive number $\Lambda_\lambda$ such that
		\begin{align}\label{7}
			\Lambda_\lambda^{-1}\leq |h_{\lambda}(x)|\leq \Lambda_\lambda,\quad\forall x\in V\setminus\{h_{\lambda}=0\}.
		\end{align}

We first show that $u$ has a uniform lower bound.
Let $x_0\in V$ be a vertex where $u$ attains its minimum.
From  \eqref{1} and \eqref{e-p},  we infer
	\begin{align}\label{9}
	\nonumber	h_\lambda(x_0)e^{2u(x_0)}-c&=(-\D)^su(x_0)\\
\nonumber&=\frac{1}{\mu(x_0)}\sum_{y\in V, y\not=x_0}W_s(x_0,y) \left(u(x_0)-u(y) \right)\\
&\leq0.
	\end{align}
Since $c<0$,  it follows that
	\begin{align*}
h_\lambda(x_0)<0.
\end{align*}
Combining this with \eqref{7} and \eqref{9} yields
	\begin{align*}
		e^{2u(x_0)}\geq\frac{c}{h_\lambda(x_0)}\geq-\frac{c}{\Lambda_\lambda}>0,
	\end{align*}
and then we get the desired result
	\begin{align}\label{10}
		\min_{V}u\geq\frac{1}{2} \log  \left(-  c  \Lambda_\lambda ^{-1}  \right).
	\end{align}

To obtain the upper bound,   let $x_1\in V$ be a  vertex where $u$ attains its maximum.
Then  there is
	\begin{align}\label{11}
		h_\lambda(x_1)e^{2u(x_1)}-c=(-\D)^s u(x_1)\geq 0.
	\end{align}
We distinguish three possible cases according to the sign of $h_\lambda(x_1)$.


\textit{Case 1. $h_\lambda(x_1)>0$.}  It is clear that
	\begin{align} \label{2}
\nonumber			h_\lambda(x_1)e^{2u(x_1)}
			&=c+\frac{1}{\mu(x_1)}\sum_{y\in V, y\not=x_1}W_s(x_1,y)\left(u(x_1)-u(y)\right)\\
			&\leq  C_M \left(e^{u(x_1)}-\min_{V}u\right),
	\end{align}
where the constant
	\begin{align}\label{5}
		C_M=\max_{x\in V}\frac{\sum_{y\in V, y\not=x}W_s(x,y)}{\mu(x)}>0.
	\end{align}
Applying \eqref{7}, \eqref{10}, \eqref{2} and Young's inequality, we obtain for any  $\epsilon>0$
\begin{align*}
\nonumber	\Lambda_\lambda^{-1}e^{2u(x_1)}&\leq h_\lambda(x_1)e^{2u(x_1)}\\
\nonumber&\leq C_Me^{u(x_1)}- C_M\min_{V}u \\
				&\leq  \epsilon e^{2u(x_1)}+\frac{C_M^2}{4\epsilon} -\frac{ C_M }{2} \log \left(-  c  \Lambda_\lambda ^{-1}  \right).
\end{align*}
Selecting $\epsilon=(2\Lambda_\lambda)^{-1}$ leads to a uniform upper bound
	\begin{align*}
 \max_{V}u\leq C,
	\end{align*}
	where  the constant $C=C(G,\Lambda_\lambda,c)$. 

\textit{Case 2. $h_\lambda(x_1)<0$.}  It follows from \eqref{7} and \eqref{11} that
	\begin{align*}
\Lambda_\lambda^{-1}e^{2u(x_1)}+c\leq 
-(-\D)^s u(x_1)\leq0,
	\end{align*}
which immediately yields a uniform upper bound for $u$.

\textit{Case 3. $h_\lambda(x_1)=0$.}
If $u$ is constant, then \eqref{e-p} and \eqref{11} give the contradiction
\begin{align*}
0<-c=(-\D)^su(x_1)=0.
\end{align*}
 Then  $u$  is not constant.
Since $u(x_1)=\max_{V}u$,  we obtain
	\begin{align*}
 	-c 
\nonumber  &=\frac{1}{\mu(x_1)}\sum_{y\in V, y\not=x_1}W_s(x_1,y)\left(u(x_1)-u(y)\right)\\
&\geq C_m   \left(\max_{V}u-\min_{V}u\right)>0 ,
	\end{align*}
	where  the constant
\begin{align*}
	C_m=\min_{x\not=y}\frac{W_s(x,y)}{\mu(x)}>0.
\end{align*}
Therefore, there is
	\begin{align}\label{4}
  \max_{V}u \leq  \min_{V}u -\frac{c}{C_m}.
	\end{align}
It remains to show that $\min_V u$  admits an upper bound.

Let $x_0$ be the point where $u$ attains its minimum.  If  $u(x_0)\leq0$, then the conclusion follows immediately.
If  $u(x_0)>0$,  then it follows from \eqref{1}, \eqref{7}, \eqref{4} and $h_\lambda(x_0)<0$ that
	\begin{align*}
\Lambda_\lambda^{-1}e^{2u(x_0)}&\leq-h_\lambda(x_0)e^{2u(x_0)}\\
&=-c+\frac{1}{\mu(x_0)}\sum_{y\in V, y\not=x_0}W_s(x_0,y)(u(y)-u(x_0))\\
&\leq  -c+ C_M \left(u(x_1)-u(x_0)\right)\\
&\leq  -c  \left(1+ \frac{C_M}{C_m}\right) ,
	\end{align*}
where the constant $	C_M$ is given by \eqref{5}. This again yields a bound
\begin{align*}
 \min_{V}u \leq C=C(G, \Lambda_\lambda,c),
\end{align*}
which together with \eqref{4} completes the proof.
\end{proof}

 The second step is to compute the topological degree of the fractional Kazdan--Warner map $\mathcal{F}$� by  its homotopy invariance.

   \begin{lemma}\label{t3}
There exists a positive number $R_0$ such that for all $R> R_0$,
	\begin{align*}
		 \deg \le( \mathcal{F},B_R,0 \ri)= \left\{\begin{aligned}
&1, &\lambda\leq0,\\
&0, &\lambda>0.
\end{aligned}\right.
	\end{align*}
\end{lemma}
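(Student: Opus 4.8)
The plan is to compute $\deg(\mathcal{F}, B_R, 0)$ by constructing, in each of the two regimes $\lambda \le 0$ and $\lambda > 0$, a suitable homotopy that deforms $\mathcal{F}$ into a map whose degree is easy to evaluate, while keeping all zeros of the homotopy trapped inside a fixed ball $B_R$ so that homotopy invariance applies. The crucial preparatory ingredient is Lemma \ref{t2}: the a priori bound shows that every solution of \eqref{1} has $\|u\|_\infty$ controlled by a constant $C=C(G,\Lambda_\lambda,c)$, which gives a candidate radius $R_0$. However, the degree is defined via the zeros on the boundary $\partial B_R$, so I will need the homotopy itself (not just $\mathcal{F}$) to be free of zeros on $\partial B_R$; hence the first step is to verify that an analogue of the a priori estimate holds uniformly along the chosen deformation parameter $t \in [0,1]$.

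For the case $\lambda \le 0$ I would aim to homotope $\mathcal{F}$ to a map for which a unique, nondegenerate zero can be exhibited, yielding degree $1$. A natural choice is the linear homotopy $\mathcal{F}_t(u) = (-\Delta)^s u - t\,h_\lambda e^{2u} + c$ (or a variant scaling the nonlinearity), which at $t=0$ reduces to the affine map $(-\Delta)^s u + c$. Since $(-\Delta)^s$ is a symmetric positive semidefinite operator with one-dimensional kernel (the constants), the equation $(-\Delta)^s u = -c$ is solvable because $\int_V(-c)\,d\mu \neq 0$ would obstruct it — so I must be careful here and instead choose the endpoint map to be something like $(-\Delta)^s u + (e^{2u}-1) + c$ or a strictly monotone perturbation whose unique solvability and index $+1$ follow from monotonicity of the operator $u \mapsto (-\Delta)^s u + g(u)$ with $g$ increasing. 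The key structural fact making $\lambda \le 0$ tractable is that $h_\lambda = h + \lambda \le 0$ everywhere (since $\max_V h = 0$ and $\lambda \le 0$), so the nonlinear term $-h_\lambda e^{2u} \ge 0$ is monotone in a sign-compatible way; this should let me show the whole homotopy family has no boundary zeros and that the limiting map is a homeomorphism or strictly monotone, giving degree $1$.

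For the case $\lambda > 0$ the goal is degree $0$, which I would obtain by a homotopy that pushes all solutions out of every ball — the standard device is to deform toward a map with no zeros at all in $C(V)$. Concretely I would consider $\mathcal{F}_t(u) = (-\Delta)^s u - h_\lambda e^{2u} + c + t\,M$ for a large constant $M>0$, or translate the whole picture by adding a large constant forcing term, so that at $t=1$ the equation $(-\Delta)^s u = h_\lambda e^{2u} - c - M$ becomes unsolvable (integrating against the constant function kills the $(-\Delta)^s$ term and leaves a strictly signed right-hand side). The integration-by-parts identity \eqref{part} applied with $v\equiv 1$ gives $\int_V (-\Delta)^s u\, d\mu = 0$, so solvability forces $\int_V (h_\lambda e^{2u} - c - M)\,d\mu = 0$, which fails for $M$ large; a map with no zeros has degree $0$. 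The obstacle is again ensuring no zeros escape across $\partial B_R$ during the deformation, which requires a uniform a priori bound along the family — this is where I expect the main difficulty, since the estimate in Lemma \ref{t2} must be shown to survive the added parameter, and the threshold behavior near $\lambda$ small positive (where solutions genuinely exist) means the radius $R_0$ must be chosen after fixing $\lambda$ and the homotopy, not uniformly in $\lambda$.

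The main obstacle, therefore, is the boundary-zero control: degree invariance under homotopy is only valid if $\mathcal{F}_t(u) \neq 0$ for all $u \in \partial B_R$ and all $t\in[0,1]$, so I must upgrade the single-equation a priori bound of Lemma \ref{t2} into a bound uniform over the homotopy parameter, then take $R_0$ larger than that bound. Once that uniform estimate is secured, the two degree computations reduce to identifying the degree of the simple endpoint maps — a strictly monotone/coercive map (index $+1$) in the first regime and a zero-free map (degree $0$) in the second — and the conclusion follows by homotopy invariance.
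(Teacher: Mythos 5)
Your $\lambda>0$ argument contains a genuine error at its central step. You claim that for large $M$ the endpoint equation $(-\Delta)^s u = h_\lambda e^{2u}-c-M$ is unsolvable because integrating over $V$ kills the left-hand side and ``leaves a strictly signed right-hand side.'' But the resulting identity $\int_V h_\lambda e^{2u}\,d\mu=(c+M)|V|$ is a constraint on $u$, not a fixed quantity: when $\lambda>0$ the set $\{h_\lambda>0\}$ is nonempty (at any maximum point of $h$ one has $h_\lambda=\lambda>0$, since $\max_V h=0$), so $\int_V h_\lambda e^{2u}\,d\mu$ is unbounded above as $u$ grows on that set, the necessary condition is satisfiable for every $M$, and no contradiction results --- indeed, Kazdan--Warner-type equations with a positive constant on the right are typically solvable when the coefficient is positive somewhere, so your endpoint map is not zero-free. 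The problem is compounded by the homotopy itself: moving the constant term from $c<0$ through $0$ to $c+M>0$ destroys the a priori estimate, because the lower bound $\min_V u\geq\frac12\log\left(-c\Lambda_\lambda^{-1}\right)$ in Lemma \ref{t2} uses $c<0$ essentially (it is what forces $h_\lambda(x_0)<0$ at the minimum point); once $c+tM\geq 0$, solutions can escape to $-\infty$, so the boundary-zero control that you yourself flag as the main obstacle genuinely fails along your family.

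The paper avoids both difficulties by deforming the coefficient rather than the constant: it interpolates $h_{\lambda,t}=(1-t)h_\lambda+t\tilde h_\lambda$ with $\tilde h_\lambda$ as in \eqref{43}, which preserves the pointwise sign pattern of $h_\lambda$ and keeps $c<0$ fixed, so the three-case argument of Lemma \ref{t2} yields bounds uniform in $t$; and it arranges $\int_V\tilde h_\lambda\,d\mu>0$ as in \eqref{6}. Nonsolvability at $t=1$ then comes not from testing with $v\equiv 1$ but with $v=e^{-2u}$: by \eqref{part} and the monotonicity inequality \eqref{8}, any solution would give $\int_V\tilde h_\lambda\,d\mu=\int_V e^{-2u}(-\Delta)^s u\,d\mu+c\int_V e^{-2u}\,d\mu<0$, a contradiction --- this test function is the idea missing from your sketch. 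Your $\lambda\leq0$ plan, by contrast, is essentially the paper's: you correctly discard the scaling homotopy $t\,h_\lambda e^{2u}$ (whose endpoint $(-\Delta)^s u+c$ is zero-free and would absurdly force degree $0$; the resolution is that solutions blow up as $t\to 0^+$, so that homotopy is inadmissible), and your monotone endpoint matches the paper's $\mathcal{T}(u,1)=(-\Delta)^s u+e^{2u}+c$, with unique zero $\log\sqrt{-c}$ and Jacobian $\bm{L_s}-2c\bm{I}$ positive definite, giving index $+1$ --- though even there you still owe the uniform bound along the connecting homotopy, which the paper obtains precisely because its sign-preserving interpolation lets the proof of Lemma \ref{t2} run unchanged for every $t\in[0,1]$.
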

	\begin{proof}
We first construct a homotopy needed to  calculate the topological degree.
Define 
	\begin{align*}
		\mathcal{T}(u,t)=(-\Delta)^s u-h_{\lambda,t}e^{2u}+c,
	\end{align*}
for any $u\in C(V)$ and  $t\in[0,1]$,   where
	\begin{align}\label{43}
h_{\lambda,t}=(1-t)\,h_\lambda+t\,\tilde{h}_\lambda \quad \text{with} \quad
\tilde{h}_\lambda(x)= \left\{\begin{aligned}
			&\frac{2|V|}{\min_{V}\mu}  &\text{ if }\quad  h_\lambda(x)>0,\\
			&-1 &\text{ if }\quad h_\lambda(x)\leq0,\end{aligned}\right.
\end{align}
and $|V|=
\sum_{x\in V} \mu(x) .$
It is immediate that $h_{\lambda,0}=h_\lambda$ and
  	\begin{align*}
\mathcal{T}(u,0)=\mathcal{F}(u)\quad \text{and}	 \quad	\mathcal{T}(u,1)=(-\Delta)^s u- \tilde{h}_\lambda e^{2u}+c.
	\end{align*}
 Following the argument in the proof of Lemma \ref{t2}, we derive a uniform bound for all solutions of $\mathcal{T}(u,t)=0$.
That is, if $u_t$ solves $\mathcal{T}(u,t)=0$ for any fixed $t\in [0,1]$,  then
there exists   $R_0>0$  such that
\begin{align*}
\|u_t\|_\infty\leq R_0.
\end{align*}
  Hence, for all $R>R_0$,
\begin{align*}
\left\{\p B_R\times[0,1]\right\}\cap\mathcal{T}^{-1}(\{0\})=\emptyset .
\end{align*}
By the homotopy invariance of the topological degree,  one has
	\begin{align}\label{20}
	\deg\le(\mathcal{F},B_R,0\ri) =	 \deg \le(\mathcal{T}(\cdotp,0),B_R，0\ri)
=\deg\le(\mathcal{T}(\cdotp,1),B_R,0\ri),\quad\forall  R>R_0.
	\end{align}
We now analyze the degree according to the sign of $\lambda$.

 \textit{Case 1. $\lambda\leq0$.} We first determine the solvability of $\mathcal{T}(u,1)=0$.
 		We claim that the unique solution is the constant function
\begin{align*}
u\equiv\log\sqrt{-c}.
\end{align*}
 Otherwise, there exist  two  vertices $x_0\not =x_1$ such that
\begin{align*}
u(x_0)=\min_Vu<u(x_1)=\max_Vu.
\end{align*}
   Since $\max_{x\in V}h(x)=0$ and $\lambda\leq0$,  the definition of $\tilde h_\lambda$ implies $\tilde h_\lambda\equiv -1$,  and then
	\begin{align*}
 -e^{2u(x_0)}\leq c \leq   -e^{2u(x_1)},
 \end{align*}  	which is impossible. Hence the claim holds.

Rewrite $V=\{x_1, x_2,  \cdots,x_n\}$.
 Each function $u\in C(V)$ can be viewed as a column vector
   \begin{align*}
   	\bm{u}=\le(u(x_1),u(x_2),\cdots,u(x_n)\ri)^T.
   \end{align*}
  Let     $\bm{L_s}$  be the graph fractional Laplacian  matrix
satisfying
    \begin{align*}
    \bm{{L_s}} \bm{u}=
    \left(
  (-\Delta)^s u(x_1),
  (-\Delta)^s  u(x_2), \cdots,
  (-\Delta)^s u(x_n) \right)^T,
   \end{align*}
and set  $\bm{I}$ be the $n\times n$ identity matrix.
A direct computation implies that
		\begin{align*}
			D\mathcal{T}(\log\sqrt{-c},1)=\bm{L_s} -2c\bm{I}.
			\end{align*}
From \eqref{phi-1}, the eigenvalues of $(-\D)^s$ satisfy
  \begin{align*}
       	0=\lambda_1^s<\lambda_2^s\leq \cdots\leq\lambda^s_n.
       \end{align*}
 Then the matrix $\bm{L_s}$ is  positive semidefinite. This together with $c<0$ leads to
				\begin{align*}
			\det	D\mathcal{T}(\log\sqrt{-c},1)&=	\det \le(\bm{L_s} -2c\bm{I}\ri)\\
&\geq\det \bm{L_s} +(-2c)^n\det \bm{I}\\
& >0.
				\end{align*}
Therefore, for all $R>R_0$,
		\begin{align*}
		 \deg \le( \mathcal{F},B_R,0 \ri)& =	 \deg \le(\mathcal{T}(\cdotp,1),B_R\ri)\\
&=\textrm{sgn}\det{D\mathcal{T}(\log\sqrt{-c},1)}\\
& =1.
		\end{align*}

\textit{Case 2. $\lambda>0$.}
By definition of $\tilde h_\lambda$ in \eqref{43},  we get
		\begin{align}\label{6}
			\int_{V}\tilde{h}_\lambda d\mu=2|V|\sum_{\{x\in V:h_\lambda(x)>0\}}\frac{\mu(x)}{\min_{V}\mu} -|V|>0.
		\end{align}
From  \eqref{part}, \eqref{N} and  the inequality
 $\left(a-b\right)(e^{-2a}-e^{-2b}) \leq0$  for any  $a, b\in \mathbb{R}$,
we obtain
\begin{align}\label{8}
\int_V e^{-2u}(-\Delta)^s u d\mu=\int_{V}\nabla^s e^{-2u}\nabla^s ud\mu \leq0, \quad \forall u\in C(V).
\end{align}
If  $u$ solves  $\mathcal{T}(u,1)=0$,  then    from $c<0$ and \eqref{8},
	\begin{align*}
  	\int_V\tilde{h}_\lambda d\mu  =	\int_V e^{-2u}(-\Delta)^s u d\mu+ c\int_{V} e^{-2u}d\mu < 0,
	\end{align*}
which  contradicts \eqref{6}. Then $\mathcal{T}(u,1)=0$ has no solution.
	  Using  \eqref{20} and 	 the Kronecker  existence theorem  \cite[Corollary 3.2.4]{C2},  we conclude that
\begin{align*}
 \deg \le(\mathcal{F},B_R,0\ri)=0,\quad \forall R>R_0.
 \end{align*}
This completes the proof.
	\end{proof}

	\section{Proof of Theorem \ref{s2:t1}}\label{S3}
	
In this section,  we establish Theorem \ref{s2:t1} in two subsections.
First, we consider the solvability of the fractional Kazdan--Warner equation \eqref{1} for $\lambda\leq 0$.
Next, we study the case $\lambda>0$,  and determine a critical value $\Lambda_s^*\in(0,-\min_V h)$ such that equation \eqref{1} exhibits the solution structure described in part (ii) of Theorem \ref{s2:t1}.

 \subsection{The case $\lambda\leq 0$}

As a direct application of Lemma \ref{t3}, we obtain the existence and uniqueness of the solution to \eqref{1} when $\lambda\leq 0$.

\begin{proof}[\textbf{Proof of $(\mathrm{\mathbf{i}})$ in Theorem \ref{s2:t1}}]
If $\lambda\leq 0$,   then Lemma \ref{t3} ensures  that there exists  $R_0>0$ such that
	\begin{align*}
	\deg\le(\mathcal{F},B_{R_0},0\ri)\not=0.
	\end{align*}
Hence the equation \eqref{1} admits at least one solution by the Kronecker existence result \cite[Corollary 3.2.4]{C2}.

To prove uniqueness, assume that there exist two distinct solutions   $u\not\equiv\varphi$.
Then we obtain
	\begin{align}\label{23}
			(-\D)^s(u-\varphi) =h_\lambda (e^{2u }-e^{2\varphi }).
	\end{align}
Integrating over $V$ and using $h_\lambda\leq 0$ and \eqref{part}, we get
	\begin{align}\label{24}
 \max_{V}(u-\varphi)>0.
	\end{align}
Let $x_1\in V$ be a  vertex where $u-\varphi$ attains its maximum. Then
	\begin{align}\label{25}
 (-\D)^s (u-\varphi)(x_1)>0.
	\end{align}
On the other hand, \eqref{23},  \eqref{24} and $h_\lambda\leq 0$ imply
	\begin{align*}
	(-\D)^s (u-\varphi)(x_1)=h_\lambda(x_1)(e^{2u(x_1)}-e^{2\varphi(x_1)})\leq0,
	\end{align*}
which contradicts \eqref{25}.  Therefore, the solution is unique.
\end{proof}

	\subsection{The case  $\lambda>0$}

We now consider the case $\lambda>0$. Our strategy is to first construct a local minimum solution, and then employ the topological degree argument  to establish the existence of another solution.   Note that solutions of \eqref{1} correspond to critical points of the energy functional $J_\lambda:C(V)\to \mathbb{R}$ defined by
	\begin{align*}
		J_\lambda(u)=\frac{1}{2}\int_{V}|\nabla^s u|^2d\mu-\frac{1}{2}\int_{V}h_\lambda e^{2u}d\mu+c\int_{V}ud\mu.
	\end{align*}
The first step is to show the following lemma.

	\begin{lemma}\label{l7}
		For sufficiently small $\lambda>0$,  
 the equation \eqref{1} admits a local minimum solution.
	\end{lemma}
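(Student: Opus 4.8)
The plan is to produce a local minimum of $J_\lambda$ for small $\lambda>0$ by exhibiting a strict sub-solution / super-solution pair, or more directly by perturbing off the unique $\lambda=0$ solution using the implicit function theorem and then verifying that the resulting critical point is a local minimizer. First I would record the reference configuration: when $\lambda=0$, part (i) (just proved) gives a unique solution $u_0$ of $(-\Delta)^s u_0 = h\,e^{2u_0}-c$. The Fréchet derivative of $\mathcal{F}$ at $u_0$ (with $\lambda=0$) is the linear map $v\mapsto (-\Delta)^s v - 2h\,e^{2u_0}v$; since $h\le 0$ and $\max_V h=0$ with $h\not\equiv 0$, the operator $(-\Delta)^s - 2h\,e^{2u_0}$ is symmetric and positive definite (the quadratic form is $\int_V|\nabla^s v|^2\,d\mu - 2\int_V h\,e^{2u_0}v^2\,d\mu$, and both terms are $\ge 0$ with the first vanishing only on constants, on which the second is strictly positive because $h\not\equiv0$). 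In particular $D\mathcal{F}(u_0)$ is invertible.

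Next I would invoke the implicit function theorem. Viewing $\mathcal{F}$ as a smooth map $C(V)\times\mathbb{R}\to C(V)$, $(u,\lambda)\mapsto (-\Delta)^s u-(h+\lambda)e^{2u}+c$, with $\mathcal{F}(u_0,0)=0$ and $D_u\mathcal{F}(u_0,0)$ invertible, there is a $C^1$ curve $\lambda\mapsto u_\lambda$ of solutions defined for $\lambda$ in a neighborhood of $0$. For $\lambda$ small and positive this yields a genuine solution $u_\lambda$ of \eqref{1}. By continuity the Hessian $D^2 J_\lambda(u_\lambda)$, whose associated quadratic form is $\int_V|\nabla^s v|^2\,d\mu - 2\int_V h_\lambda\,e^{2u_\lambda}v^2\,d\mu$, stays positive definite for $\lambda$ small (it is a small perturbation of the positive definite form at $\lambda=0$). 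A critical point with positive definite Hessian is a strict local minimum, which is exactly the assertion.

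The main obstacle, and the point requiring the most care, is the positive definiteness of the linearized operator at $u_0$, i.e. the coercivity estimate $\int_V|\nabla^s v|^2\,d\mu - 2\int_V h\,e^{2u_0}v^2\,d\mu \ge \delta\|v\|_2^2$ for some $\delta>0$. On a connected finite graph the fractional Dirichlet form $\int_V|\nabla^s v|^2\,d\mu$ controls $v$ only up to additive constants (its kernel is exactly the constants, since $0=\lambda_1^s$ is a simple eigenvalue), so the strict positivity on the constant direction must come entirely from the zeroth-order term $-2\int_V h\,e^{2u_0}v^2\,d\mu$; this is where $h\not\equiv 0$ is essential. I would establish the estimate by a standard compactness/contradiction argument: if no uniform $\delta$ worked, a minimizing sequence of the Rayleigh quotient (normalized in $L^2$) would, by finite-dimensionality of $C(V)$, converge to some $v_*$ with $\|v_*\|_2=1$ making the form vanish, forcing $\nabla^s v_*\equiv 0$ (hence $v_*$ constant) and $\int_V h\,e^{2u_0}v_*^2\,d\mu=0$ (hence $v_*\equiv 0$ on the set where $h<0$), a contradiction. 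Once this estimate is in hand, the perturbation and local-minimum conclusions follow routinely.
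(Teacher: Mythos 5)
Your proposal is correct, but it takes a genuinely different route from the paper. The paper argues by barriers: it picks a large constant $-B$ with $L_\lambda(-B)=-h_\lambda e^{-2B}+c<0$, and a super-solution $\psi$ obtained from part (i) of Theorem \ref{s2:t1} as the unique solution of the auxiliary equation $(-\Delta)^s\psi=he^{2\psi}-(c-1)$, so that $L_\lambda(\psi)=1-\lambda e^{2\psi}>0$ whenever $\lambda<e^{-2\psi}$; it then minimizes $J_\lambda$ over the compact order interval $\{-B\le u\le\psi\}$ and shows, by differentiating $J_\lambda$ along one-sided Dirac-mass perturbations $\pm t\delta_{x}$ at a touching point, that the minimizer stays strictly inside the interval, hence is a local minimum critical point. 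Your argument instead linearizes at the unique $\lambda=0$ solution $u_0$ (also supplied by part (i), so there is no circularity in either approach), proves coercivity of $(-\Delta)^s-2he^{2u_0}$ exactly as you describe --- the kernel of the fractional Dirichlet form is the constants since the graph is connected and $\lambda_1^s=0$ is simple, and the zeroth-order term is strictly positive on constants because $h\le 0$, $h\not\equiv 0$ --- and then continues the solution by the implicit function theorem, with positive definiteness of the Hessian surviving small perturbations by compactness of the unit sphere in the finite-dimensional space $C(V)$ (this last point matters, since for $\lambda>0$ the term $-2\int_V h_\lambda e^{2u_\lambda}v^2\,d\mu$ is no longer pointwise nonnegative, and your perturbation argument correctly absorbs it). Your method buys more at this lemma: a nondegenerate strict local minimum, a smooth branch $\lambda\mapsto u_\lambda$, and local uniqueness. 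The paper's method buys an explicit smallness threshold $\lambda<\min_V e^{-2\psi}$ and, more importantly, a template that transfers verbatim to Lemma \ref{l8}, where the upper barrier $\psi$ is replaced by a solution $u_{\lambda^*}$ at a larger parameter; your implicit-function argument would not extend there, since no nondegeneracy is known at $u_{\lambda^*}$, so if one adopted your proof one would still need the paper's barrier argument (or an equivalent) for the propagation step.
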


	\begin{proof}
Define
\begin{align*}
L_\lambda(u)=(-\Delta)^s u - h_\lambda e^{2u} + c,\quad \forall u\in C(V).
\end{align*}
Since $c<0$, we can choose  a sufficiently large constant $B>0$   such that
\begin{align}\label{28}
L_\lambda(-B)=-h_\lambda e^{-2B}+c<0.
\end{align}
By Theorem \ref{s2:t1}, for any fixed $c<1$, there exists a unique function $\psi$ satisfying
\begin{align*}
(-\Delta)^s \psi = h e^{2\psi} - (c-1).
\end{align*}
Then, for any $0<\lambda<e^{-2\psi}$, we  have
		\begin{align}\label{29}
			 L_\lambda(\psi)=
1-\lambda e^{2\psi}>0.
		\end{align}
 Since the functional  $J_\lambda$  is continuously differentiable with respect to $u$, and the set $\{u:-B\leq u\leq\psi\}$  is compact, there exists $u_\lambda\in C(V)$ such that
\begin{align}\label{30}
J_\lambda(u_\lambda)
=\min_{-B\leq u\leq\psi} J_\lambda(u).
\end{align}

We claim that for sufficiently small $\lambda>0$,
		\begin{align}\label{31}
			-B<u_\lambda<\psi.
		\end{align}
Suppose first that $u_\lambda(x_0)=-B$ for some $x_0\in V$.
Choosing $\epsilon>0$  sufficiently small, we have
		\begin{align*}
			-B\leq  u_\lambda+t\delta_{x_0} \leq\psi,\quad\forall   t\in(0,\epsilon),
		\end{align*}
		where $\delta_{x_0}$ denotes the  Dirac function at  $x_0$.
Using \eqref{28} and \eqref{30}, we obtain
		\begin{align*}
				0&\leq\dfrac{d}{dt}\bigg{|}_{t=0}J_\lambda \left(u_\lambda+t\delta_{x_0}\right)\\
				&=(-\D)^s u_\lambda(x_0)+L_\lambda(-B)(x_0)\\
				&<(-\D)^s u_\lambda(x_0).
		\end{align*}
		However, since $x_0$ is the minimum point of $u_\lambda$, we have $(-\D)^s u_\lambda(x_0)\leq0$,
which yields a contradiction.
Next, assume that $u_\lambda(x_1)=\psi(x_1)$ for some $x_1\in V$, then there exists a sufficiently small $\epsilon>0$ such that
		\begin{align*}
			-B\leq  u_\lambda -t\delta_{x_1} \leq \psi ,\quad\forall  t\in(0,\epsilon).
		\end{align*}
Using \eqref{29} and the minimizing property \eqref{30}, we compute
		\begin{align*}
				0&\leq\dfrac{d}{dt}\bigg{|}_{t=0}J_\lambda\left(u_\lambda-t\delta_{x_1}\right)\\
				&=-(-\D)^s (u_\lambda-\psi)(x_1)-L_\lambda(\psi)(x_1)\\
				&<-(-\D)^s (u_\lambda-\psi)(x_1),
		\end{align*}
		which contradicts the facts that $x_1$ is the maximum point of  $u_\lambda-\psi$ and $(-\D)^s (u_\lambda-\psi)(x_1)\geq0$.
  Therefore,    the claim \eqref{31} follows.

 	Finally, \eqref{30} and \eqref{31} imply that  $u_\lambda$ is a local minimum critical point of $J_\lambda$,
and then $u_\lambda$ is a solution of equation  \eqref{1}.
	\end{proof}

Next, we show that the existence of a solution at some parameter propagates to smaller values.

\begin{lemma}\label{l8}
If $L_{\lambda^*}(u)=0$ admits a solution for some $\lambda^*>0$, then for any  $0<\lambda<\lambda^*$,  the equation \eqref{1} has a local minimum solution.
\end{lemma}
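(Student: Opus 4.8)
The plan is to reuse the sub- and super-solution construction from Lemma \ref{l7}, but with a key modification: the existing solution at $\lambda^*$ serves as a supersolution for all smaller $\lambda$. First I would take $u^*$ to be the given solution of $L_{\lambda^*}(u^*)=0$, and observe that for any $0<\lambda<\lambda^*$ we have
\begin{align*}
L_\lambda(u^*)=(-\Delta)^s u^* - h_\lambda e^{2u^*} + c = L_{\lambda^*}(u^*) + (\lambda^*-\lambda)e^{2u^*} = (\lambda^*-\lambda)e^{2u^*}>0,
\end{align*}
so $u^*$ is a strict supersolution at level $\lambda$. This replaces the role played by $\psi$ in Lemma \ref{l7}.

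Next I would produce a subsolution independent of this. As in the proof of Lemma \ref{l7}, since $c<0$ I can fix $B>0$ large enough that $L_\lambda(-B)=-h_\lambda e^{-2B}+c<0$ at every vertex; here I must check that a single $B$ works uniformly over the relevant $\lambda$-range, which is immediate because $h_\lambda=h+\lambda$ is bounded on the finite graph for $\lambda\in(0,\lambda^*)$, so $-h_\lambda e^{-2B}\to 0$ as $B\to\infty$ uniformly. After possibly enlarging $B$ so that $-B<u^*$ holds everywhere, the order relation $-B<u^*$ is in place and the box $\{u:-B\le u\le u^*\}$ is nonempty and compact.

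I would then minimize the energy functional $J_\lambda$ over the compact set $\{u:-B\le u\le u^*\}$, obtaining a minimizer $u_\lambda$ exactly as in \eqref{30}. The heart of the argument is showing the minimizer lies strictly in the interior, i.e. $-B<u_\lambda<u^*$. This is where I expect the main work, and it follows the two one-sided variations in Lemma \ref{l7} verbatim: if $u_\lambda(x_0)=-B$ at some vertex, perturbing by $+t\delta_{x_0}$ and using $L_\lambda(-B)(x_0)<0$ together with $(-\Delta)^s u_\lambda(x_0)\le 0$ at a minimum point gives a contradiction; if $u_\lambda(x_1)=u^*(x_1)$ at some vertex, perturbing by $-t\delta_{x_1}$ and using $L_\lambda(u^*)(x_1)>0$ together with $(-\Delta)^s(u_\lambda-u^*)(x_1)\ge 0$ at a maximum point of $u_\lambda-u^*$ gives the other contradiction. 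The only genuine difference from Lemma \ref{l7} is that the supersolution $\psi$ is replaced by $u^*$, and the strict supersolution inequality \eqref{29} is replaced by the displayed identity $L_\lambda(u^*)=(\lambda^*-\lambda)e^{2u^*}>0$.

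Once interiority is established, $u_\lambda$ is an unconstrained local minimizer of $J_\lambda$ in a neighborhood, hence a critical point, hence a solution of \eqref{1}; this closes the proof. The main obstacle, such as it is, is bookkeeping: confirming that the subsolution constant $B$ can be chosen uniformly and below $u^*$, and verifying the sign of the derivative identity $L_\lambda(u^*)=(\lambda^*-\lambda)e^{2u^*}$, which is the one new computation. Everything else is a direct transcription of the monotone iteration / constrained minimization already carried out for Lemma \ref{l7}.
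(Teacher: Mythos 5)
Your proposal is correct and follows essentially the same route as the paper: the paper's proof of Lemma \ref{l8} likewise observes that $L_\lambda(u_{\lambda^*})>L_{\lambda^*}(u_{\lambda^*})=0$, pairs the given solution $u_{\lambda^*}$ as supersolution with the constant subsolution $-B$ from \eqref{28}, and repeats the constrained-minimization and interiority argument of Lemma \ref{l7} to obtain $J_\lambda(u_\lambda)=\min_{-B<u<u_{\lambda^*}}J_\lambda(u)$. Your explicit identity $L_\lambda(u^*)=(\lambda^*-\lambda)e^{2u^*}$ and the remark on choosing $B$ uniformly and below $u^*$ are just the bookkeeping the paper leaves implicit.
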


	\begin{proof}
Let $u_{\lambda^*}$ be a solution of  $L_{\lambda^*}(u)=0$.
Then   for any  $0<\lambda<\lambda^*$,
\[
L_\lambda(u_{\lambda^*})
>L_{\lambda^*}(u_{\lambda^*})=0.
\]
Combining this with \eqref{28}  and repeating  the same arguments of Lemma \ref{l7}, we obtain  the desired local minimum  $u_\lambda$  satisfying
		\begin{align*}
			J_\lambda(u_\lambda)
=\min_{-B<u<u_{\lambda^*}}J_\lambda(u).
		\end{align*}
This completes the proof.
	\end{proof}

Lemmas \ref{l7} and \ref{l8} allow us to define the critical parameter
	\begin{align}\label{33}
		\Lambda^*_s=\sup\le\{\lambda>0:J_\lambda\ \mathrm{has\ a\ local\ minimum\ critical\ point}\ri\}.
	\end{align}
We now show that $\Lambda_s^*$ has a uniform upper bound depending only on $h$.

	\begin{lemma}\label{l9}
		For any $s\in(0,1)$,  there holds
\begin{align*}
0<\Lambda^*_s<-\min_{V}h.
\end{align*}
	\end{lemma}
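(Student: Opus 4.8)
The plan is to prove the two inequalities in Lemma~\ref{l9} separately, treating the strict upper bound as the substantive part. The lower bound $\Lambda_s^*>0$ is immediate from Lemma~\ref{l7}: that lemma guarantees a local minimum critical point of $J_\lambda$ for all sufficiently small $\lambda>0$, so the set appearing in the definition \eqref{33} of $\Lambda_s^*$ is nonempty and contains an interval $(0,\delta)$ for some $\delta>0$; hence $\Lambda_s^*\geq\delta>0$.

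For the upper bound, the natural first move is to integrate the equation \eqref{1} against the constant $1$. Since $(-\Delta)^s$ annihilates constants, $\int_V(-\Delta)^s u\,d\mu=0$, which yields the balance law $\int_V h_\lambda e^{2u}\,d\mu=c|V|<0$. Using the pointwise bound $h_\lambda\geq\min_V h+\lambda$ this already forces $\lambda<-\min_V h$ for every individual solution. The difficulty is that this argument delivers only the \emph{non-strict} bound $\Lambda_s^*\leq-\min_V h$: it permits solutions for $\lambda$ arbitrarily close to $-\min_V h$, since the same balance law gives $\int_V e^{2u}\,d\mu\geq|c||V|/(-\min_V h-\lambda)$, so the solutions may concentrate as $\lambda\uparrow-\min_V h$. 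Closing this gap to a strict inequality is the main obstacle, and the crude weighted average of $h$ produced by the weight $e^{2u}$ is exactly what blocks it.

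The key idea I would use to recover strictness is to test \eqref{1} against $e^{-2u}$ rather than against $1$, which cancels the exponential weight in the source term. By the integration-by-parts formula \eqref{part} together with the sign inequality \eqref{8}, the left-hand side contributes $\int_V e^{-2u}(-\Delta)^s u\,d\mu\leq0$, while the right-hand side becomes $\int_V h_\lambda\,d\mu-c\int_V e^{-2u}\,d\mu$. Since $c<0$, this produces $\int_V h_\lambda\,d\mu\leq c\int_V e^{-2u}\,d\mu<0$, and expanding $h_\lambda=h+\lambda$ gives $\lambda|V|<-\int_V h\,d\mu$, so every solution satisfies
\[
\lambda<\frac{1}{|V|}\int_V(-h)\,d\mu=:\lambda_1 .
\]
Because $h\leq0$, $h\not\equiv0$ and $\max_V h=0$, the function $-h$ is non-constant, so its $\mu$-average is strictly below its maximum, i.e.\ $\lambda_1<\max_V(-h)=-\min_V h$. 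Consequently no solution of \eqref{1}, hence no local minimum critical point of $J_\lambda$, exists once $\lambda\geq\lambda_1$, giving $\Lambda_s^*\leq\lambda_1<-\min_V h$.

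The only delicate point is the strict separation $\lambda_1<-\min_V h$, which rests entirely on $h$ being non-constant; I would record explicitly that a constant $h$ would force $h\equiv\max_V h=0$, contradicting $h\not\equiv0$. Thus the whole proof reduces to choosing the correct test function: replacing $1$ (or equivalently the weight $e^{2u}$) by $e^{-2u}$ turns the \emph{weighted} average of $h$, which only sees $\min_V h$, into the \emph{unweighted} average, whose strict gap below the maximum supplies the desired strict inequality.
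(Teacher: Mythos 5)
Your proof is correct, but your upper-bound argument takes a genuinely different route from the paper's. The paper integrates \eqref{1} against the constant function: since $\int_V(-\Delta)^s u\,d\mu=0$, any local minimum solution satisfies $\int_V h_\lambda e^{2u}\,d\mu=c|V|<0$, hence $\min_V h_\lambda=\min_V h+\lambda<0$, i.e.\ $\lambda<-\min_V h$, and the bound on $\Lambda_s^*$ is then read off from the definition \eqref{33}. This is exactly the ``weighted average'' argument you diagnose as delivering only the non-strict bound: taking the supremum over such $\lambda$ gives $\Lambda_s^*\leq-\min_V h$, and the strictness asserted in the lemma implicitly rests on the compactness argument developed later (in the proof of part (ii)(b), a solution is produced at $\lambda=\Lambda_s^*$ via Lemmas \ref{l8} and \ref{t2}, after which the same integration excludes $\Lambda_s^*=-\min_V h$). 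Your alternative --- testing against $e^{-2u}$, precisely as in Case 2 of the proof of Lemma \ref{t3}, so that \eqref{part} and \eqref{8} give $\int_V h_\lambda\,d\mu\leq c\int_V e^{-2u}\,d\mu<0$ --- is cleaner on this point: it yields the quantitative bound $\Lambda_s^*\leq\frac{1}{|V|}\int_V(-h)\,d\mu$, and since $\max_V h=0$ with $h\not\equiv0$ forces the $\mu$-average of $-h$ strictly below $-\min_V h$ (the vertex where $h$ vanishes carries positive measure), strictness falls out directly, with no limiting argument and for \emph{all} solutions rather than only local minima. The lower bound $\Lambda_s^*>0$ via Lemma \ref{l7} is the same in both treatments. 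In short, your route buys an explicit, sharper upper bound and a self-contained proof of strictness; the paper's route is shorter given that the compactness machinery is needed anyway for part (ii)(b).
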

	\begin{proof}
Let $u$ be a local minimum solution of \eqref{1}.
		Integrating  over $V$ and using $c<0$, we obtain
			\begin{align*}
				\int_{V}h_\lambda e^{2u}d\mu=c|V|<0,
				\end{align*}
which implies
\begin{align*}
\min_{V}h_\lambda=\min_{V}h+\lambda<0.
\end{align*}
The upper bound then follows immediately from the definition \eqref{33}.
	\end{proof}


At the end of this section,   we complete the proof of Theorem \ref{s2:t1}.

\begin{proof}[\textbf{Proof of $(\mathrm{\mathbf{ii}})$  in Theorem \ref{s2:t1}}]
For $0<\lambda<\Lambda_s^*$,
let $u_\lambda$ denote the unique local minimum critical point of $J_\lambda$, and set $a=J_\lambda(u_\lambda)$.
Let $U$ be a neighborhood of $u_\lambda$,  and
\begin{align*}
J_\lambda^a=\{u\in C(V):J_\lambda(u)\leq a\}.
\end{align*}
From  \cite[Chapter 1]{C},  the $q$-th critical group of $J_\lambda$ at $u_\lambda$ is defined by
 		\begin{align*}
			C_q\left(u_\lambda,J_\lambda\right)=H_q\left(J_\lambda^a\cap U,  (J_\lambda^a\setminus\{u_\lambda\} )\cap U; \mathbb{Z}\right).
		\end{align*}
where  $H_q$ denotes the relatively singular homology  with coefficients  in $\mathbb{Z}$.
By the excision property,  $H_q$ is independent of the choice of  $U$.
A direct computation yields
	\begin{align*}
			C_q(u_\lambda,J_\lambda)
		 =H_q(\{u_\lambda\}, \emptyset; \mathbb{Z}) 	
			 =\delta_{q0}\mathbb{Z}.
	\end{align*}
	Following the proof  of Lemma \ref{t2}, we   deduce that $J_\lambda$ satisfies the Palais-Smale condition.
 Since
		\begin{align*}
			DJ_\lambda(u)=(-\D)^s u-h_\lambda e^{2u}+c=\mathcal{F}(u),
		\end{align*}
	where $\mathcal{F}$ is given in \eqref{3}.
From  \cite[Theorem 3.2]{C}  and 	Lemma \ref{t3},  for any $R> R_0$,
		\begin{align*}
 \deg \le( \mathcal{F},B_R,0 \ri) = \deg \le(DJ_\lambda,B_R,0\ri)
=1,
       \end{align*}
    which contradicts  	\begin{align*} \deg \le( \mathcal{F},B_R,0 \ri)=0 \end{align*}
    derived from Lemma \ref{t3}. Therefore,  $J_\lambda$ has another distinct  critical point, which means that the equation \eqref{1} admits at least two distinct solutions.

Next, for  $\lambda=\Lambda^*_s$,
let  $\{ \lambda_k^*\}$  be an increasing sequence converging to  $\Lambda_s^*$ as $k\rightarrow +\infty$.
For each $k$, Lemma \ref{l8} together with \eqref{33} ensures the existence of a solution $u_{\lambda_k^*}$  to \eqref{1}.
Lemma \ref{t2}  further implies that   the sequence $\{u_{\lambda_k^*}\}$ is uniformly bounded in $C(V)$.
Up to a subsequence,  $u_{\lambda_k^*}$ converges uniformly to a function  $u^*$  as $k\rightarrow +\infty$.
And the limit $u^*$  solves  \eqref{1} for $\lambda=\Lambda_s^*$.
Hence a solution exists at the critical  value.

Finally, for  $\lambda\in(\Lambda^*_s,+\infty)$,   nonexistence follows directly from the definition of  $\Lambda^*_s$.
Otherwise,  \eqref{1} admits a solution for some $\lambda^*>\Lambda_s^*$.
  Then Lemma \ref{l8}  implies the existence of a local minimum solution for any $\lambda\in(\Lambda_s^*,\lambda^*)$, which contradicts  \eqref{33}. This completes the proof.
\end{proof}

\section{Numerical Experiments}\label{S4}

In the previous sections, we focused on establishing the existence of the critical number $ \Lambda_s^* $ as stated in Theorem \ref{s2:t1}. In this section, we present experimental results that investigate the effects of different parameter settings on $ \Lambda_s^* $. We aim to explore the variation of $ \Lambda_s^* $ as the value of $ c $ changes. We use Gurobi to solve this problem and summarize the results. These results provide a clearer and more intuitive understanding of how various configurations influence $ \Lambda_s^* $.
In the following, we first introduce the notations and the settings used in our experiments, and then we present and analyze the experimental results.

For any finite graph $G=(V,E,\mu,w)$,   we  rewrite
$
 V=\{x_1, x_2,  \cdots,x_n\}
$
 with $n=\sharp V$,
define
the  measure matrix  of  $G$ as
          \begin{align*}
 \bm{U}=\text{diag} \left(\mu(x_1), \mu(x_2),\cdots, \mu(x_n) \right) .
   \end{align*}
   and assign the  adjacency matrix of  $G $ as $  \bm{A}=(A_{ij})_{n\times n}  $   with
       \begin{align*}
A_{ij}=   \left\{\begin{aligned}
    &w_{ij},  &\text{ if } x_j\sim x_i,\\
    &0, & \text{else}. \quad \ \ \ \
    \end{aligned}\right.
    \end{align*}
    Each function $u:V\rightarrow\mathbb{R}$ can be viewed as a column vector
       \begin{align*}	\bm{u}=\le(u(x_1),u(x_2),\cdots,u(x_n)\ri)^T.\end{align*}
   Define the  fractional weight matrix  as  $ {\bm{W_s}=(W_{s,ij})_{n\times n}}$ with
    \begin{align*}
W_{s,ij}=   \left\{\begin{aligned}
    &-W_s(x_i, x_j),  &\text{ if }  j\neq  i,\\
    &  \sum_{1\leq j\leq n, j\neq i}W_s(x_i, x_j)  , &\text{ if }  j= i,
    \end{aligned}\right.
    \end{align*}
 where   $s\in (0,1)$ and
 $ W_s(x,y)$ is given as in \eqref{Hs}.  Let
 \begin{align*}
  {\bm{L_s}=\bm{U}^{-1} \bm{W_s}   }
  \end{align*}
   be the graph fractional Laplacian  matrix
satisfying
    \begin{align*}
    \bm{{L_s}} \bm{u}= (-\Delta)^s {\bm{u}} =
    \left(
  (-\Delta)^s u(x_1),
  (-\Delta)^s  u(x_2), \cdots,
  (-\Delta)^s u(x_n) \right)^T.
   \end{align*}
It is clear that
      \begin{align*}
      {\bm{L_s}} =      {\bm{U}^{-1}}\bm{W_s} = \left(\bm{U}^{-\frac{1}{2}} \bm{U}^{\frac{1}{2}}\right)
      {\bm{U}^{-1}}\bm{W_s}
     \left( \bm{U}^{-\frac{1}{2}} \bm{U}^{\frac{1}{2}}\right)
       = \bm{U}^{-\frac{1}{2}}
            \left( { \bm{U}^{-\frac{1}{2}}\bm{W_s}      \bm{U}^{-\frac{1}{2}} }\right)\bm{U}^{\frac{1}{2}}.
       \end{align*}
Since the matrix   $ { \bm{U}^{-\frac{1}{2}}\bm{W_s}      \bm{U}^{-\frac{1}{2}} }$ is a real symmetric matrix,   the graph fractional Laplacian matrix  $   {\bm{{L_s}}} $  can be similarly diagonalized.
From \eqref{phi-1}, we derive
  \begin{align*}
  	\bm{L_s}\bm{\phi}_i=\lambda_i^s \bm{\phi}_i \quad \text{ with } \quad \bm{\phi}_i=({\phi}_i(x_1),{\phi}_i(x_2),\cdots,{\phi}_i(x_n))^T
  	 \end{align*}
  for all $  i  = 1, 2, \cdots , n.$
 Therefore,  we obtain that  $\bm{\phi}_1,\, \bm{\phi}_2,\,\cdots, \,\bm{\phi}_n$ are orthonormal eigenvalues corresponding to the eigenvalues  $\lambda_1^s, \lambda_2^s,   \cdots \lambda_n^s$  of $\bm{L_s}$.
  Here the orthonormality concerns the inner product concerning the measure $\mu$: 
    \begin{align*}
     \langle \bm{\phi}_i, \bm{\phi}_j\rangle= \bm{\phi}_i^T \bm{U} \bm{\phi}_j =
     \sum_{k=1}^{n}\phi_i(x_k)\phi_j(x_k)   \mu(x_k)  =\left\{\begin{aligned}    &1,&{\rm if }\ \ i=j,\\    &0,&{\rm if }\ \ i\not=j.    \end{aligned}\right.
      \end{align*}
If we define
\begin{align*}
\Phi=  \left(\bm{\phi}_1, \bm{\phi}_2,\cdots, \bm{\phi}_n\right) \ \text{  and } \  \Lambda_s  =\text{diag}  ( \lambda^s_1,  \lambda_2^s,\cdots,  \lambda_n^s ) ,
\end{align*}
     then
      \begin{align*}
      \bm{L_s}=\Phi \Lambda_s \Phi^{-1}  \ \text{ and }\   \Phi^{T} \bm{U} \Phi=\bm{I} .
      \end{align*}

Therefore, the problem of solvability of equation  \eqref{1} is transformed into the solvability of the following   problem
\begin{align*}
    {L_s} u_i=  \left ( h_i  + \lambda  \right)e^{2u_i}    -c  ,
\end{align*}
where $u_i$, ${L_s}u_i$ and $h_i$ denote  the $i$-th component of $\bm{u}$, $\bm{{L_s}} \bm{u}$ and $\bm{h}=(h(x_1),h(x_2),\cdots,h(x_n))^T $ respectively.\\

In our experiments, we construct five types of undirected weighted graphs, as illustrated in Figure \ref{fig:1}.
The structure and parameter settings for each type are as follows:
\begin{itemize}
    \item Type 1: This graph consists of  $2$ vertices connected by a single edge. The measure matrix is $ \bm{U_1} = \mathrm{diag}(1, 0.5) $, the adjacency matrix is $ \bm{A_1} = \begin{pmatrix} 0 & 2 \\ 2 & 0 \end{pmatrix} $, and the function is $ \bm{h} = (0, -0.5) $ (see Figure \ref{fig:1.1}).

    \item Type 2: This graph consists of $3$ vertices and $2$ edges. The measure matrix is   $\bm{U_2} = \mathrm{diag}(1, 0.5,$ $0.25)$,  the adjacency matrix is   $ \bm{A_2} = \begin{pmatrix} 0 & 2 & 1 \\ 2 & 0 &0 \\ 1 & 0 & 0 \end{pmatrix} $, and the function is  $ \bm{h} = (0, -0.5, -2) $.  As shown in Figure \ref{fig:1.2}.

    \item Type 3: This is a complete graph with $3$ vertices.  Set the measure matrix $ \bm{U_3} = \mathrm{diag}(1, 0.5,0.25) $, the adjacency matrix $ \bm{A_3} = \begin{pmatrix} 0 & 2 & 1 \\ 2 & 0 &3 \\ 1 & 3 & 0 \end{pmatrix} $, and the  function $ \bm{h} = (0, -0.5, -2) $. As shown in Figure \ref{fig:1.3}.

    \item Type 4: This graph consists of $5$ vertices and $5$ edges. The measure matrix is $ \bm{U_4} = \mathrm{diag}(1, 0.5,$ $0.25,  0.125,0.0625) $, the adjacency matrix is $ \bm{A_4} = \begin{pmatrix} 0 & 0 & 3 & 4 & 0 \\ 0 & 0 & 0 & 2 & 1\\ 3 & 0 & 0 & 0 & 5 \\ 4 & 2 & 0 & 0 & 0\\ 0& 1& 5& 0 & 0\end{pmatrix} $, and  the function is $ \bm{h} = (0, -0.5, -2, -1, -1) $ (see Figure \ref{fig:1.4}).

    \item Type 5:  This is a complete graph with $5$ vertices.  Let $ \bm{U_5} = \mathrm{diag}(1, 0.5,0.25,$ $0.125,0.0625) $, $ \bm{A_5} = \begin{pmatrix} 0 & 1 & 3 & 4 & 1 \\ 1 & 0 & 1 & 2 & 1\\ 3 & 1 & 0 & 1 & 5 \\ 4 & 2 & 1 & 0 & 1\\ 1& 1& 5& 1 & 0\end{pmatrix} $  and $ \bm{h} = (0, -0.5, -2, -1, -1) $ (see Figure \ref{fig:1.5}).
\end{itemize}

\begin{figure}
    \centering
    \begin{subfigure}[b]{0.3\linewidth}
        \centering

        \tikzset{every picture/.style={line width=0.75pt}} 

        \begin{tikzpicture}[x=0.75pt,y=0.75pt,yscale=-1,xscale=1]

        \draw  [fill={rgb, 255:red, 0; green, 0; blue, 0 }  ,fill opacity=1 ] (109.5,160.42) .. controls (109.5,159.22) and (110.47,158.25) .. (111.67,158.25) .. controls (112.86,158.25) and (113.83,159.22) .. (113.83,160.42) .. controls (113.83,161.61) and (112.86,162.58) .. (111.67,162.58) .. controls (110.47,162.58) and (109.5,161.61) .. (109.5,160.42) -- cycle ;
        \draw    (111.67,160.42) -- (216,160.58) ;
        \draw  [fill={rgb, 255:red, 0; green, 0; blue, 0 }  ,fill opacity=1 ] (216,160.58) .. controls (216,159.39) and (216.97,158.42) .. (218.17,158.42) .. controls (219.36,158.42) and (220.33,159.39) .. (220.33,160.58) .. controls (220.33,161.78) and (219.36,162.75) .. (218.17,162.75) .. controls (216.97,162.75) and (216,161.78) .. (216,160.58) -- cycle ;

        \draw (100.33,166) node [anchor=north west][inner sep=0.75pt]   [align=left] {$\displaystyle x_{1}$};
        \draw (212.33,165.5) node [anchor=north west][inner sep=0.75pt]   [align=left] {$\displaystyle x_{2}$};
        \draw (157.33,144) node [anchor=north west][inner sep=0.75pt]   [align=left] {2};

        \end{tikzpicture}

        \caption{Type 1.}
        \label{fig:1.1}
    \end{subfigure}
    \hfill
    \begin{subfigure}[b]{0.3\linewidth}
        \centering

        \tikzset{every picture/.style={line width=0.75pt}} 

        \begin{tikzpicture}[x=0.75pt,y=0.75pt,yscale=-1,xscale=1]

        \draw  [fill={rgb, 255:red, 0; green, 0; blue, 0 }  ,fill opacity=1 ] (109.5,160.42) .. controls (109.5,159.22) and (110.47,158.25) .. (111.67,158.25) .. controls (112.86,158.25) and (113.83,159.22) .. (113.83,160.42) .. controls (113.83,161.61) and (112.86,162.58) .. (111.67,162.58) .. controls (110.47,162.58) and (109.5,161.61) .. (109.5,160.42) -- cycle ;
        \draw    (111.67,160.42) -- (216,160.58) ;
        \draw  [fill={rgb, 255:red, 0; green, 0; blue, 0 }  ,fill opacity=1 ] (216,160.58) .. controls (216,159.39) and (216.97,158.42) .. (218.17,158.42) .. controls (219.36,158.42) and (220.33,159.39) .. (220.33,160.58) .. controls (220.33,161.78) and (219.36,162.75) .. (218.17,162.75) .. controls (216.97,162.75) and (216,161.78) .. (216,160.58) -- cycle ;
        \draw    (111.67,160.42) -- (166.67,85.42) ;
        \draw  [fill={rgb, 255:red, 0; green, 0; blue, 0 }  ,fill opacity=1 ] (165,85.58) .. controls (165,84.39) and (165.97,83.42) .. (167.17,83.42) .. controls (168.36,83.42) and (169.33,84.39) .. (169.33,85.58) .. controls (169.33,86.78) and (168.36,87.75) .. (167.17,87.75) .. controls (165.97,87.75) and (165,86.78) .. (165,85.58) -- cycle ;

        \draw (99.83,166) node [anchor=north west][inner sep=0.75pt]   [align=left] {$\displaystyle x_{1}$};
        \draw (212.33,165.5) node [anchor=north west][inner sep=0.75pt]   [align=left] {$\displaystyle x_{2}$};
        \draw (161.33,144) node [anchor=north west][inner sep=0.75pt]   [align=left] {2};
        \draw (157.33,67.5) node [anchor=north west][inner sep=0.75pt]   [align=left] {$\displaystyle x_{3}$};
        \draw (128.33,108) node [anchor=north west][inner sep=0.75pt]   [align=left] {1};

        \end{tikzpicture}

        \caption{Type 2.}
        \label{fig:1.2}
    \end{subfigure}
    \hfill
    \begin{subfigure}[b]{0.25\linewidth}

        \tikzset{every picture/.style={line width=0.75pt}} 

        \begin{tikzpicture}[x=0.75pt,y=0.75pt,yscale=-1,xscale=1]

        \draw  [fill={rgb, 255:red, 0; green, 0; blue, 0 }  ,fill opacity=1 ] (109.5,160.42) .. controls (109.5,159.22) and (110.47,158.25) .. (111.67,158.25) .. controls (112.86,158.25) and (113.83,159.22) .. (113.83,160.42) .. controls (113.83,161.61) and (112.86,162.58) .. (111.67,162.58) .. controls (110.47,162.58) and (109.5,161.61) .. (109.5,160.42) -- cycle ;
        \draw    (111.67,160.42) -- (216,160.58) ;
        \draw  [fill={rgb, 255:red, 0; green, 0; blue, 0 }  ,fill opacity=1 ] (216,160.58) .. controls (216,159.39) and (216.97,158.42) .. (218.17,158.42) .. controls (219.36,158.42) and (220.33,159.39) .. (220.33,160.58) .. controls (220.33,161.78) and (219.36,162.75) .. (218.17,162.75) .. controls (216.97,162.75) and (216,161.78) .. (216,160.58) -- cycle ;
        \draw    (111.67,160.42) -- (166.67,85.42) ;
        \draw  [fill={rgb, 255:red, 0; green, 0; blue, 0 }  ,fill opacity=1 ] (165,85.58) .. controls (165,84.39) and (165.97,83.42) .. (167.17,83.42) .. controls (168.36,83.42) and (169.33,84.39) .. (169.33,85.58) .. controls (169.33,86.78) and (168.36,87.75) .. (167.17,87.75) .. controls (165.97,87.75) and (165,86.78) .. (165,85.58) -- cycle ;
        \draw    (167.17,85.58) -- (218.17,160.58) ;

        \draw (99.83,166) node [anchor=north west][inner sep=0.75pt]   [align=left] {$\displaystyle x_{1}$};
        \draw (212.33,165.5) node [anchor=north west][inner sep=0.75pt]   [align=left] {$\displaystyle x_{2}$};
        \draw (160.33,145) node [anchor=north west][inner sep=0.75pt]   [align=left] {2};
        \draw (157.33,67.5) node [anchor=north west][inner sep=0.75pt]   [align=left] {$\displaystyle x_{3}$};
        \draw (128.33,108) node [anchor=north west][inner sep=0.75pt]   [align=left] {1};
        \draw (193.33,109) node [anchor=north west][inner sep=0.75pt]   [align=left] {3};

        \end{tikzpicture}

        \caption{Type 3.}
        \label{fig:1.3}
    \end{subfigure}

    \vspace{1em}

    \begin{subfigure}[b]{0.375\linewidth}

        \tikzset{every picture/.style={line width=0.70pt}} 

        \begin{tikzpicture}[x=0.8pt,y=0.8pt,yscale=-1,xscale=1]

        \draw  [fill={rgb, 255:red, 0; green, 0; blue, 0 }  ,fill opacity=1 ] (59.8,130.1) .. controls (59.8,128.9) and (60.77,127.93) .. (61.96,127.93) .. controls (63.16,127.93) and (64.13,128.9) .. (64.13,130.1) .. controls (64.13,131.3) and (63.16,132.27) .. (61.96,132.27) .. controls (60.77,132.27) and (59.8,131.3) .. (59.8,130.1) -- cycle ;
        \draw    (60.96,128.93) -- (185.61,217.82) ;
        \draw  [fill={rgb, 255:red, 0; green, 0; blue, 0 }  ,fill opacity=1 ] (182.44,216.65) .. controls (182.44,215.45) and (183.41,214.48) .. (184.61,214.48) .. controls (185.81,214.48) and (186.78,215.45) .. (186.78,216.65) .. controls (186.78,217.85) and (185.81,218.82) .. (184.61,218.82) .. controls (183.41,218.82) and (182.44,217.85) .. (182.44,216.65) -- cycle ;
        \draw    (90.39,217.82) -- (138,74) ;
        \draw  [fill={rgb, 255:red, 0; green, 0; blue, 0 }  ,fill opacity=1 ] (135.83,76.17) .. controls (135.83,74.97) and (136.8,74) .. (138,74) .. controls (139.2,74) and (140.17,74.97) .. (140.17,76.17) .. controls (140.17,77.36) and (139.2,78.33) .. (138,78.33) .. controls (136.8,78.33) and (135.83,77.36) .. (135.83,76.17) -- cycle ;
        \draw    (137,74) -- (183.44,215.65) ;
        \draw    (60.96,129.93) -- (215.04,129.93) ;
        \draw    (90.39,217.82) -- (215.04,128.93) ;
        \draw  [fill={rgb, 255:red, 0; green, 0; blue, 0 }  ,fill opacity=1 ] (88.22,217.82) .. controls (88.22,216.62) and (89.19,215.65) .. (90.39,215.65) .. controls (91.59,215.65) and (92.56,216.62) .. (92.56,217.82) .. controls (92.56,219.01) and (91.59,219.98) .. (90.39,219.98) .. controls (89.19,219.98) and (88.22,219.01) .. (88.22,217.82) -- cycle ;
        \draw  [fill={rgb, 255:red, 0; green, 0; blue, 0 }  ,fill opacity=1 ] (211.87,130.1) .. controls (211.87,128.9) and (212.84,127.93) .. (214.04,127.93) .. controls (215.23,127.93) and (216.2,128.9) .. (216.2,130.1) .. controls (216.2,131.3) and (215.23,132.27) .. (214.04,132.27) .. controls (212.84,132.27) and (211.87,131.3) .. (211.87,130.1) -- cycle ;

        \draw (79.83,221) node [anchor=north west][inner sep=0.75pt]   [align=left] {$\displaystyle x_{1}$};
        \draw (180.33,222.5) node [anchor=north west][inner sep=0.75pt]   [align=left] {$\displaystyle x_{2}$};
        \draw (103.33,139) node [anchor=north west][inner sep=0.75pt]   [align=left] {4};
        \draw (219.33,120.5) node [anchor=north west][inner sep=0.75pt]   [align=left] {$\displaystyle x_{3}$};
        \draw (113.33,171) node [anchor=north west][inner sep=0.75pt]   [align=left] {1};
        \draw (151.33,172) node [anchor=north west][inner sep=0.75pt]   [align=left] {3};
        \draw (140.33,59.5) node [anchor=north west][inner sep=0.75pt]   [align=left] {$\displaystyle x_{4}$};
        \draw (41.33,125.5) node [anchor=north west][inner sep=0.75pt]   [align=left] {$\displaystyle x_{5}$};
        \draw (161.33,139) node [anchor=north west][inner sep=0.75pt]   [align=left] {2};
        \draw (134.33,117) node [anchor=north west][inner sep=0.75pt]   [align=left] {5};

        \end{tikzpicture}

        \caption{Type 4.}
        \label{fig:1.4}
    \end{subfigure}
    \hfill
    \begin{subfigure}[b]{0.4\linewidth}

        \tikzset{every picture/.style={line width=0.70pt}} 

        \begin{tikzpicture}[x=0.8pt,y=0.8,yscale=-1,xscale=1]

        \draw  [fill={rgb, 255:red, 0; green, 0; blue, 0 }  ,fill opacity=1 ] (79.8,150.1) .. controls (79.8,148.9) and (80.77,147.93) .. (81.96,147.93) .. controls (83.16,147.93) and (84.13,148.9) .. (84.13,150.1) .. controls (84.13,151.3) and (83.16,152.27) .. (81.96,152.27) .. controls (80.77,152.27) and (79.8,151.3) .. (79.8,150.1) -- cycle ;
        \draw    (80.96,148.93) -- (205.61,237.82) ;
        \draw  [fill={rgb, 255:red, 0; green, 0; blue, 0 }  ,fill opacity=1 ] (202.44,236.65) .. controls (202.44,235.45) and (203.41,234.48) .. (204.61,234.48) .. controls (205.81,234.48) and (206.78,235.45) .. (206.78,236.65) .. controls (206.78,237.85) and (205.81,238.82) .. (204.61,238.82) .. controls (203.41,238.82) and (202.44,237.85) .. (202.44,236.65) -- cycle ;
        \draw    (110.39,237.82) -- (158,94) ;
        \draw  [fill={rgb, 255:red, 0; green, 0; blue, 0 }  ,fill opacity=1 ] (155.83,96.17) .. controls (155.83,94.97) and (156.8,94) .. (158,94) .. controls (159.2,94) and (160.17,94.97) .. (160.17,96.17) .. controls (160.17,97.36) and (159.2,98.33) .. (158,98.33) .. controls (156.8,98.33) and (155.83,97.36) .. (155.83,96.17) -- cycle ;
        \draw    (157,94) -- (203.44,235.65) ;
        \draw    (80.96,149.93) -- (235.04,149.93) ;
        \draw    (110.39,237.82) -- (235.04,148.93) ;
        \draw  [fill={rgb, 255:red, 0; green, 0; blue, 0 }  ,fill opacity=1 ] (108.22,237.82) .. controls (108.22,236.62) and (109.19,235.65) .. (110.39,235.65) .. controls (111.59,235.65) and (112.56,236.62) .. (112.56,237.82) .. controls (112.56,239.01) and (111.59,239.98) .. (110.39,239.98) .. controls (109.19,239.98) and (108.22,239.01) .. (108.22,237.82) -- cycle ;
        \draw  [fill={rgb, 255:red, 0; green, 0; blue, 0 }  ,fill opacity=1 ] (231.87,150.1) .. controls (231.87,148.9) and (232.84,147.93) .. (234.04,147.93) .. controls (235.23,147.93) and (236.2,148.9) .. (236.2,150.1) .. controls (236.2,151.3) and (235.23,152.27) .. (234.04,152.27) .. controls (232.84,152.27) and (231.87,151.3) .. (231.87,150.1) -- cycle ;
        \draw    (81.96,150.1) -- (158,96.33) ;
        \draw    (204.61,234.48) -- (234.04,152.27) ;
        \draw    (160.17,97.17) -- (234.04,150.1) ;
        \draw    (81.96,150.1) -- (110.39,235.65) ;
        \draw    (110.39,237.82) -- (202.44,236.65) ;

        \draw (99.83,241) node [anchor=north west][inner sep=0.75pt]   [align=left] {$\displaystyle x_{1}$};
        \draw (200.33,242.5) node [anchor=north west][inner sep=0.75pt]   [align=left] {$\displaystyle x_{2}$};
        \draw (123.33,159) node [anchor=north west][inner sep=0.75pt]   [align=left] {4};
        \draw (239.33,140.5) node [anchor=north west][inner sep=0.75pt]   [align=left] {$\displaystyle x_{3}$};
        \draw (133.33,191) node [anchor=north west][inner sep=0.75pt]   [align=left] {1};
        \draw (171.33,192) node [anchor=north west][inner sep=0.75pt]   [align=left] {3};
        \draw (160.33,79.5) node [anchor=north west][inner sep=0.75pt]   [align=left] {$\displaystyle x_{4}$};
        \draw (61.33,145.5) node [anchor=north west][inner sep=0.75pt]   [align=left] {$\displaystyle x_{5}$};
        \draw (181.33,159) node [anchor=north west][inner sep=0.75pt]   [align=left] {2};
        \draw (154.33,137) node [anchor=north west][inner sep=0.75pt]   [align=left] {5};
        \draw (158.42,240.23) node [anchor=north west][inner sep=0.75pt]   [align=left] {1};
        \draw (221.32,196.38) node [anchor=north west][inner sep=0.75pt]   [align=left] {1};
        \draw (198.33,110) node [anchor=north west][inner sep=0.75pt]   [align=left] {1};
        \draw (108.33,109) node [anchor=north west][inner sep=0.75pt]   [align=left] {1};
        \draw (83.33,191) node [anchor=north west][inner sep=0.75pt]   [align=left] {1};

        \end{tikzpicture}

        \caption{Type 5.}
        \label{fig:1.5}
    \end{subfigure}

    \caption{Five types of graphs.}
    \label{fig:1}
\end{figure}
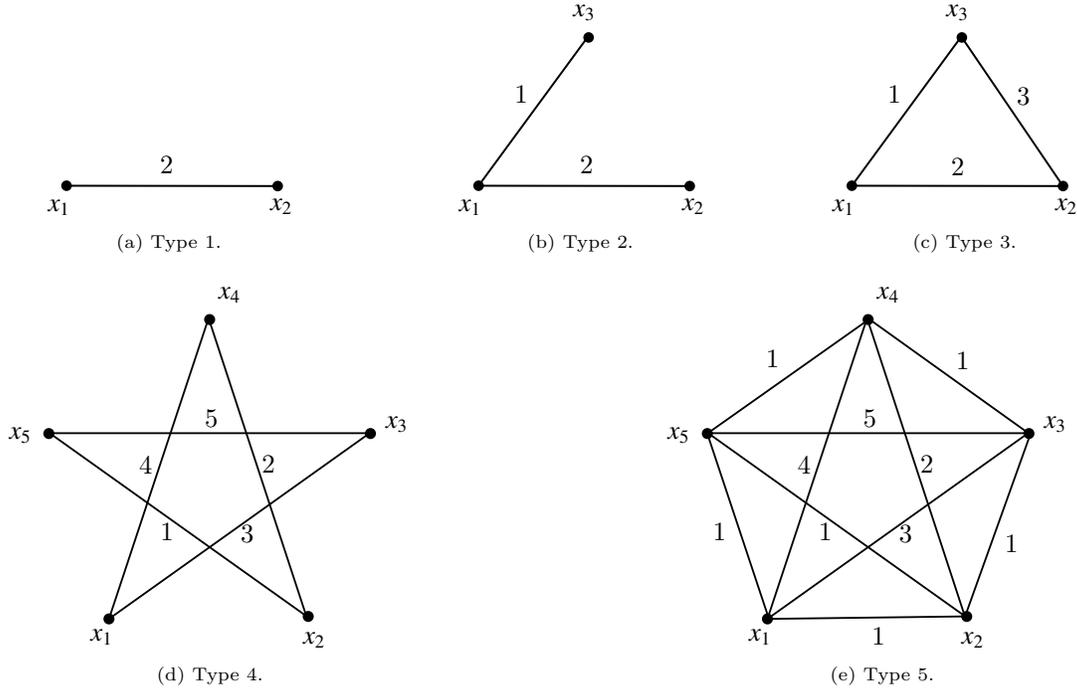

\begin{figure}
    \centering
    \begin{subfigure}[b]{0.49\linewidth}
        \centering
        \includegraphics[width=\linewidth]{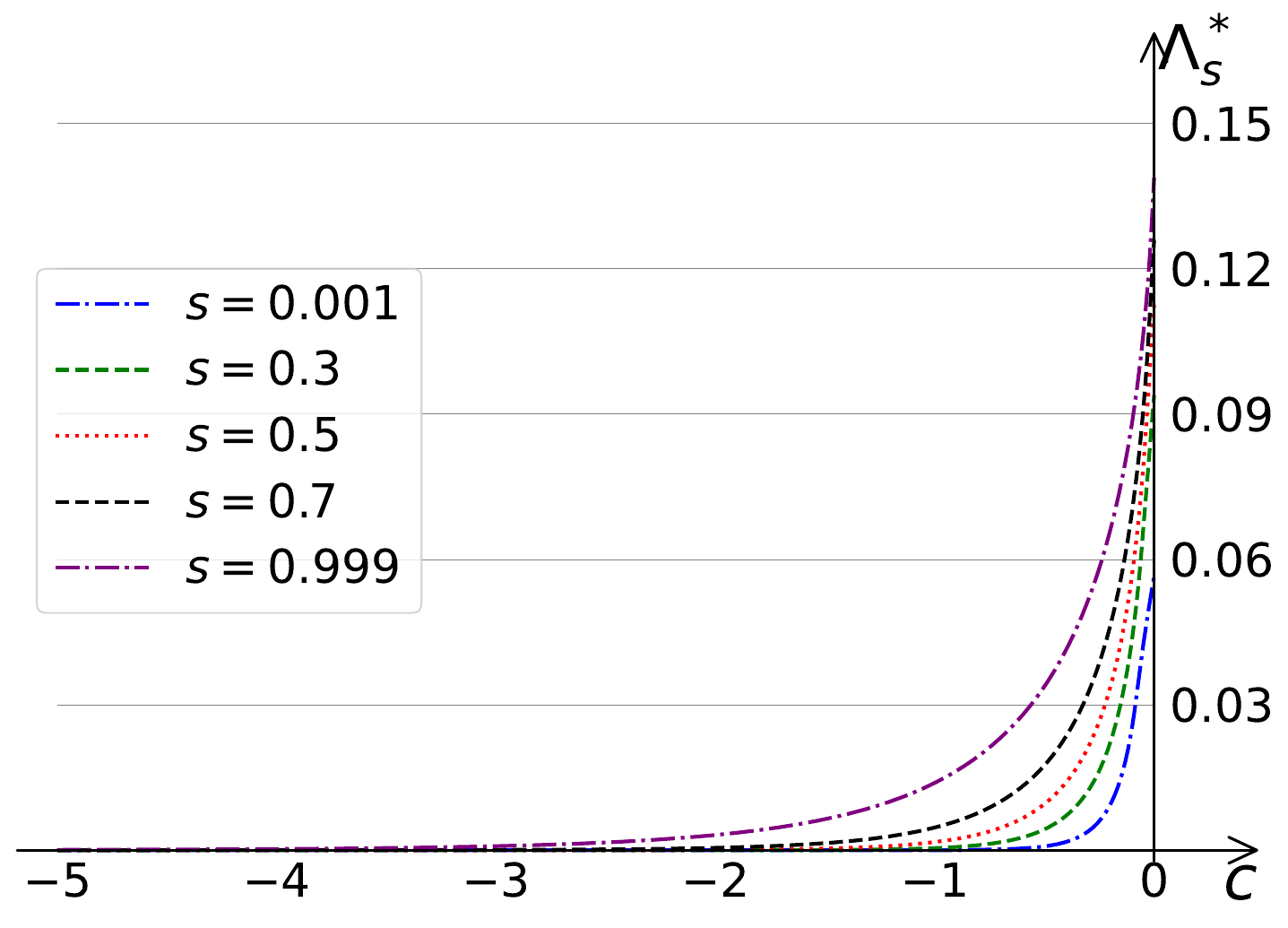}
        \caption{Variation of $\Lambda_s^*$ for $c$ from $-5$ to $0$ in Type 1.}
        \label{fig:2.1}
    \end{subfigure}

    \begin{subfigure}[b]{0.49\linewidth}
        \centering
        \includegraphics[width=\linewidth]{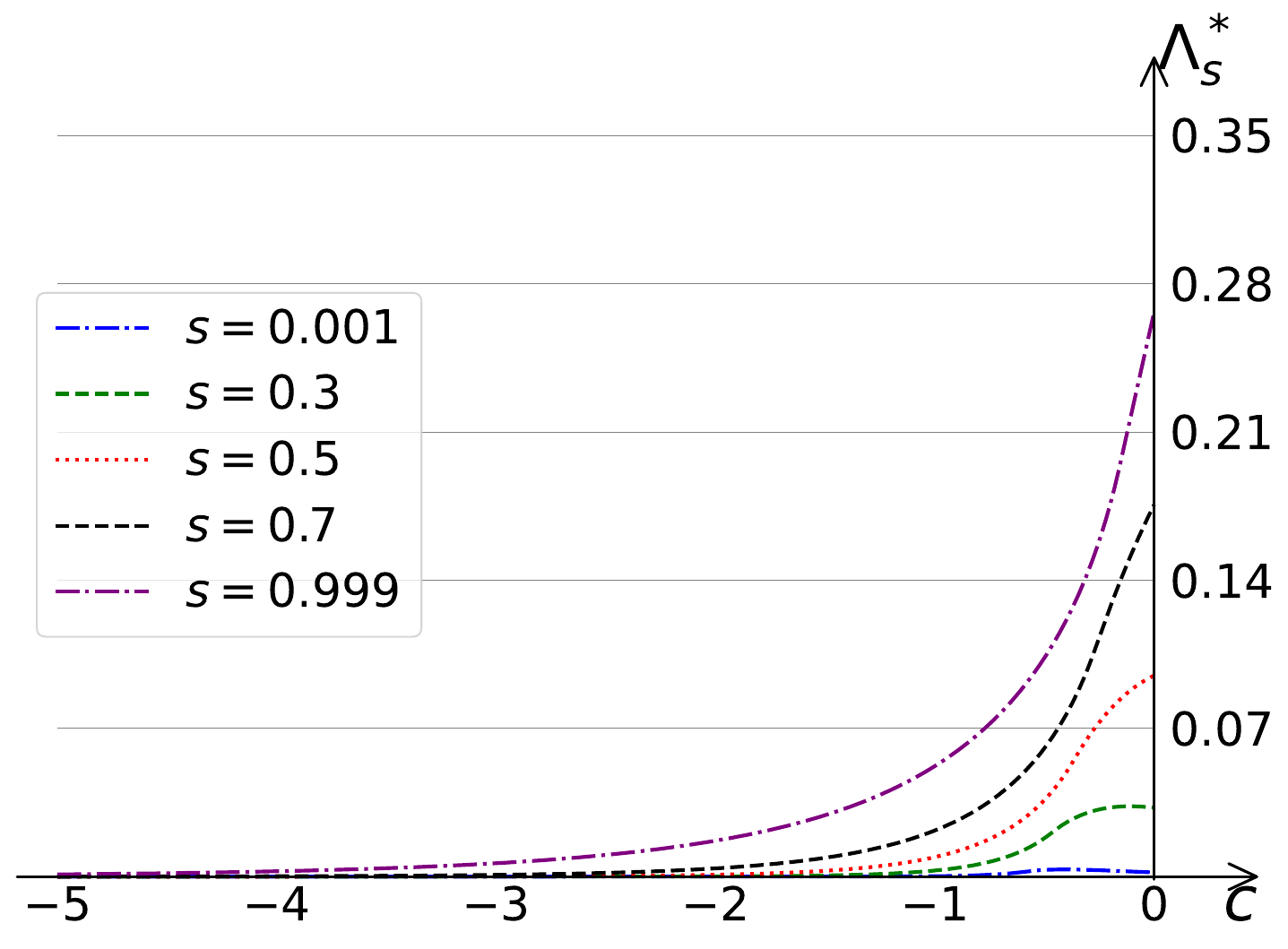}
        \caption{Variation of $\Lambda_s^*$ for $c$ from $-5$ to $0$ in Type 2.}
        \label{fig:2.2}
    \end{subfigure}
    \hfill
    \begin{subfigure}[b]{0.49\linewidth}
        \centering
        \includegraphics[width=\linewidth]{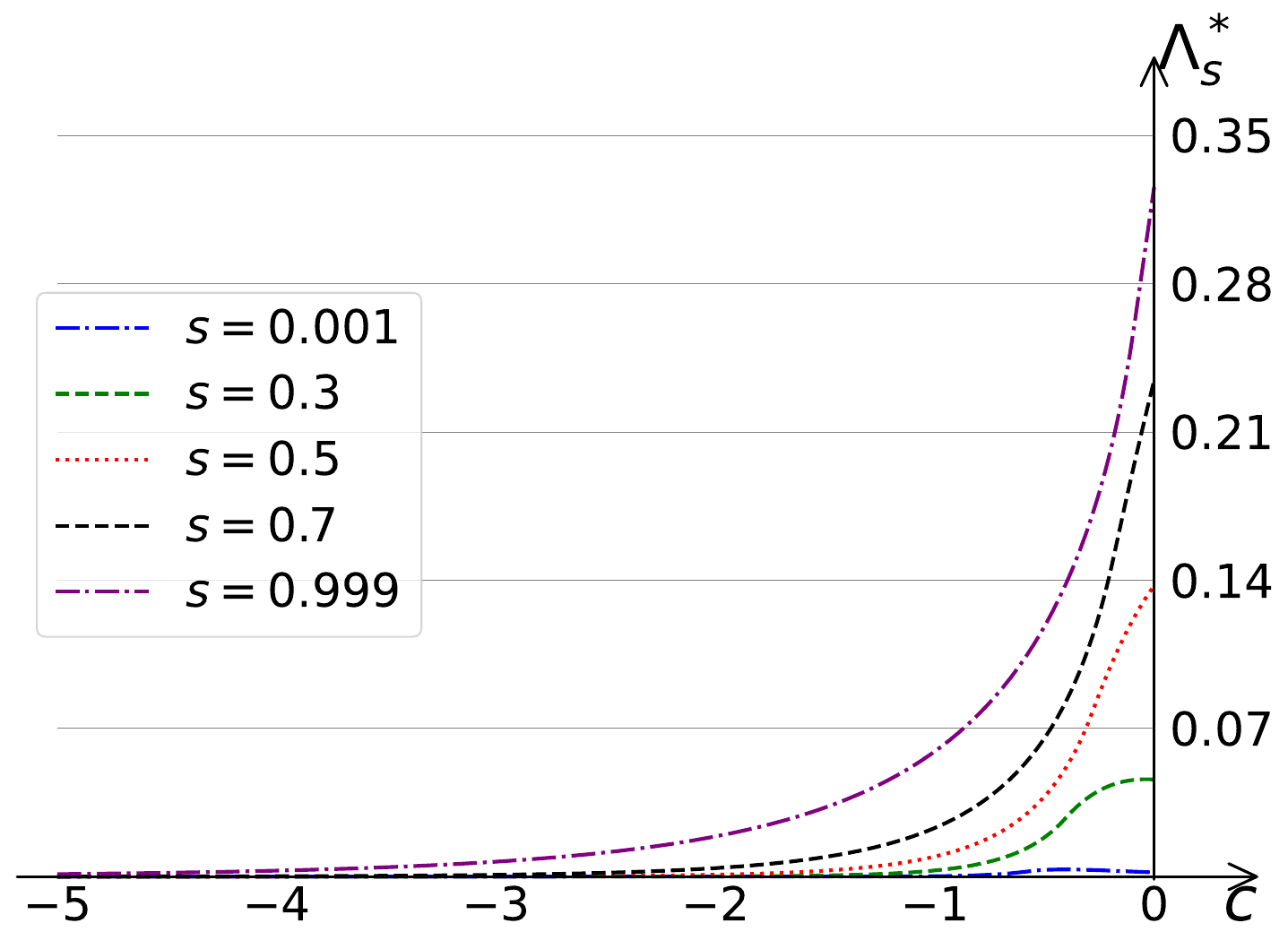}
        \caption{Variation of $\Lambda_s^*$ for $c$ from $-5$ to $0$ in Type 3.}
        \label{fig:2.3}
    \end{subfigure}

    \begin{subfigure}[b]{0.49\linewidth}
        \centering
        \includegraphics[width=\linewidth]{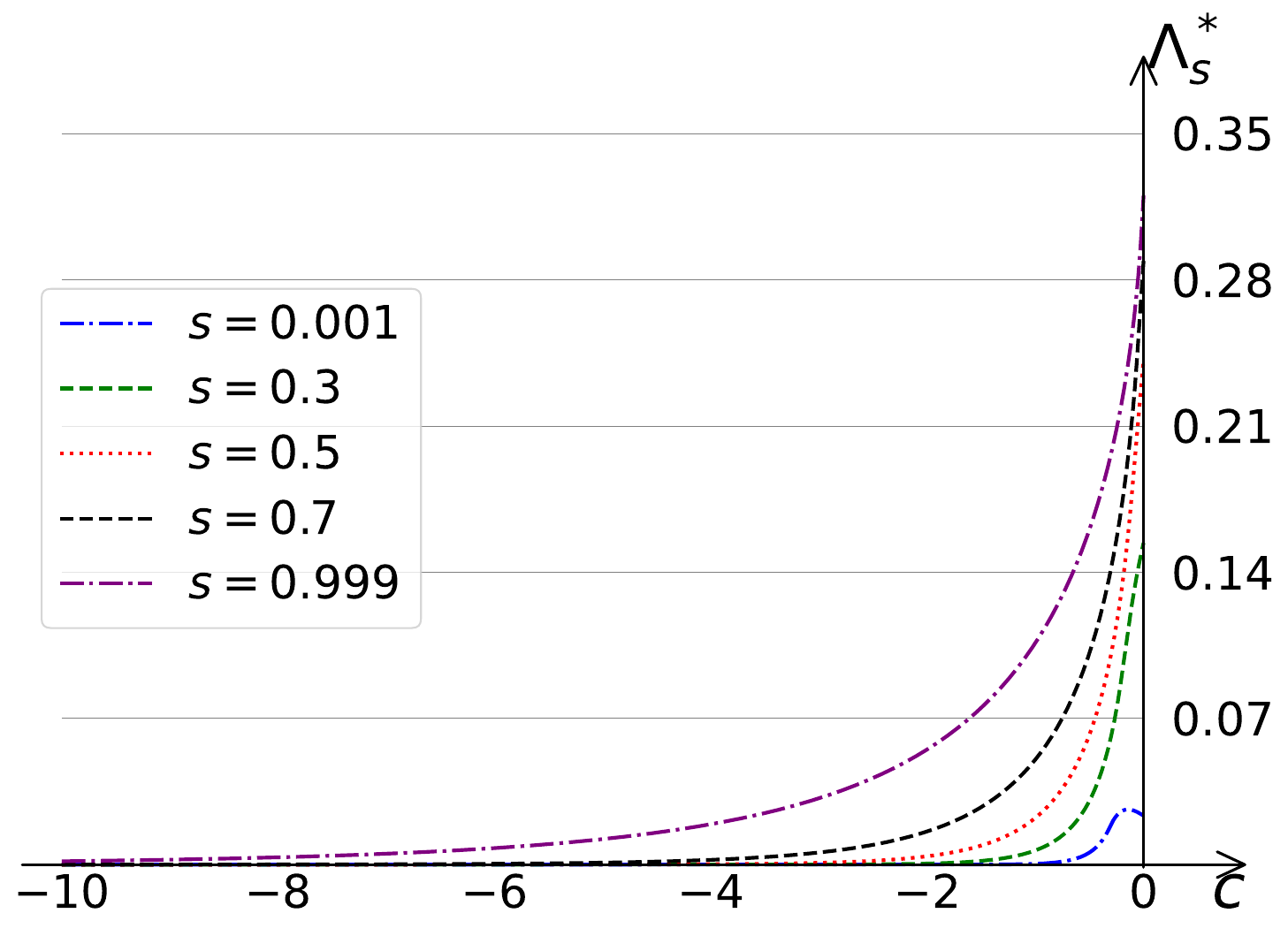}
        \caption{Variation of $\Lambda_s^*$ for $c$  from $-10$ to $0$ in Type 4.}
        \label{fig:2.4}
    \end{subfigure}
    \hfill
    \begin{subfigure}[b]{0.49\linewidth}
        \centering
        \includegraphics[width=\linewidth]{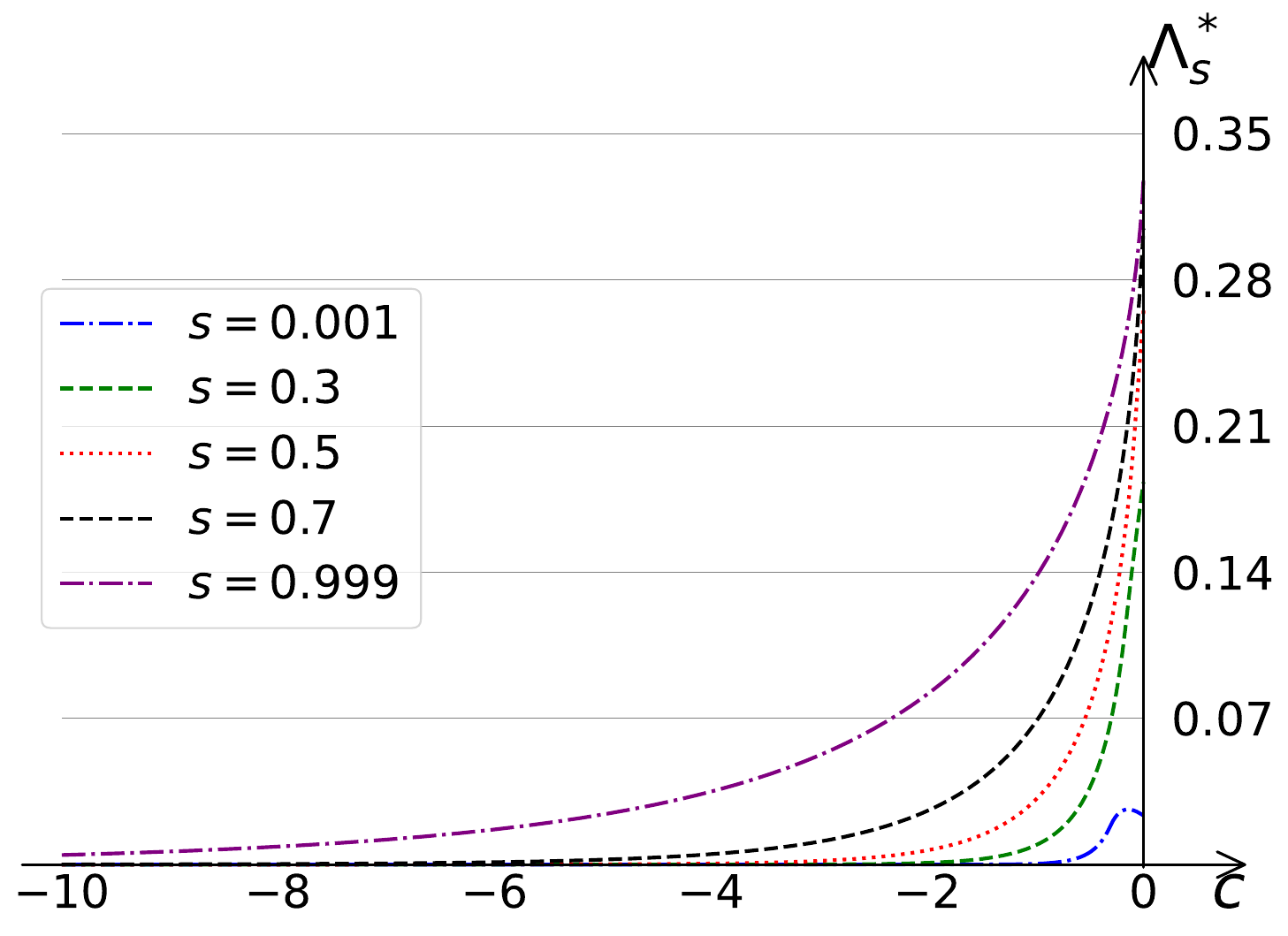}
        \caption{Variation of $\Lambda_s^*$ for $c$ from $-10$ to $0$ in Type 5.}
        \label{fig:2.5}
    \end{subfigure}

    \caption{The experimental results.}
    \label{fig:2}
\end{figure}

For each type, we calculated the variation of $ \Lambda_s^* $ as $ c<0 $ decreasing under different values of $ s $. By visualizing the results, we obtained the outcomes shown in Figure \ref{fig:2}. In Figure \ref{fig:2}, the horizontal axis represents the values of $ c $, with a step size of 0.01, and the vertical axis represents the values of $ \Lambda_s^* $. For each type, we calculate $ \Lambda_s^* $ on  five different $ s $ values, specifically $ s$ is equal to $0.001, 0.3, 0.5, 0.7, 0.999$. It is worth noting that the vertical range in Figure \ref{fig:2.1} is 0.15, unlike the other four plots, which have a vertical range of 0.35. Additionally, the horizontal range in Figures \ref{fig:2.4} and \ref{fig:2.5} is from -10 to 0, in contrast to the range of -5 to 0 used in the first three plots.

 By examining the content of all five plots, it can be observed that the lines in each plot do not intersect and are arranged from top to bottom in descending order of $ s $ values. This indicates that as $ s $ increases, the value of $ \Lambda_s^* $ also rises. By comparing Figures \ref{fig:2.2} and \ref{fig:2.3}, it can be observed that complete graphs exhibit higher $ \Lambda_s^* $ values than non-complete graphs. Although Figures \ref{fig:2.4} and \ref{fig:2.5} appear visually similar, the numerical results clearly show that the $ \Lambda_s^* $ values are higher for complete graphs.

 By examining the green dashed lines in Figures \ref{fig:2.2} and \ref{fig:2.3}, as well as the blue dash-dotted lines in Figures \ref{fig:2.2}, \ref{fig:2.3}, \ref{fig:2.4}, and \ref{fig:2.5}, it can be observed that the value of $ \lambda_s^* $ is not always positively correlated with $ c $. Under certain conditions, $ \lambda_s^* $ becomes negatively correlated with $ c $. For example, in Figure \ref{fig:2.4}, this negative correlation is evident in the portion of the blue dash-dotted line near the zero region on the horizontal axis. An interesting direction for future research would be to investigate the conditions under which $ \Lambda_s^* $ becomes negatively correlated with $ c $.
Another important conclusion that can be drawn from the experimental results is that when the value of $ c $ becomes sufficiently small, $ \Lambda_s^* $ converges to 0.
Another intriguing research direction would be to theoretically prove whether $ \Lambda_s^* $ converges to 0 as $ c \rightarrow -\infty $.

	\bigskip

\nolinenumbers
	
	\noindent
	\textbf{Acknowledgements.}   All authors would like to appreciate the reviewers and editors for their careful reading, constructive comments and helpful suggestions on this paper.\\		

		\noindent
	\textbf{Author Contributions.} All authors contributed equally to this work.  \\	

		\noindent
	\textbf{Funding.} This paper is supported by the National Natural Science Foundation of China (Grant Number: 12471088).


\end{document}